\theoremstyle{plain}
\newtheorem{theorem}{Theorem}[section]
\theoremstyle{plain}
\newtheorem{lemma}[theorem]{Lemma}
\theoremstyle{plain}
\newtheorem{corollary}[theorem]{Corollary}
\theoremstyle{definition}
\newtheorem{definition}[theorem]{Definition}
\theoremstyle{plain}
\newtheorem{proposition}[theorem]{Proposition}
\theoremstyle{remark}
\theoremstyle{definition}
\newtheorem{example}[theorem]{Example}
\theoremstyle{plain}
\theoremstyle{plain}
\theoremstyle{plain}
\begin{document}

\title{Abbott Dimension, Mathematics Inspired by \emph{Flatland}}
\markright{Abbott Dimension}
\author{Jeremy Siegert}
\address{Ben Gurion University of the Negev, Beer Sheva, Israel} 
\email{siegertj@post.bgu.ac.il}
\date{\today}

\maketitle

\begin{abstract}
What is the "right way" to define dimension? Mathematicians working in the early and middle $20$th-century formalized three intuitive definitions of dimension that all turned out to be equivalent on separable metric spaces. But were these definitions the "right" ones? What would it mean to have the "right" definition of dimension? In this paper we attempt to inspire thought about these questions by introducing Abbott dimension, a geometrically intuitive definition of dimension based on Edwin Abbott's $1884$ novella \emph{Flatland}. We show that while Abbott dimension has intuitive appeal, it does not always agree with the classical definitions of dimension on separable metric spaces.
\end{abstract}

\section{Introduction.}
What is dimension? What does it mean for something to be three dimensional? Dimension seems to be a thing that we believe should be able to be described, but it is not at all obvious how to do so. The primary task of mathematicians working in the early $1900$'s on topics that are recognized today as being in the area known as dimension theory was to find a formal and geometrically intuitive definition for dimension, more specifically for the dimension of "nice" topological spaces.
\vspace{\baselineskip}

In $1905$ Poincar\'e gave one of the first notions of dimension to be formalized by mathematicians working in this area \cite{poincare}. In this philosophically oriented text, Poincar\'e (in more eloquent language than we will use) described dimension in terms of separation. Specifically, what makes a line one dimensional is that to cut it into disconnected pieces one must cut it along a subspace that is at least zero dimensional. Similarly, to cut the plane into disconnected pieces one must cut along a subspace that is at least one dimensional, and to cut a three dimensional space one must cut along something that is at least two dimensional, etc. A formal definition was not given, but the intuition is appealing and clear. One does not need any sophisticated knowledge of geometry or topology to appreciate it. Brouwer, Cech, Menger, and Urysohn would formalize this notion into a precise definition of dimension for "nice" topological spaces (for the sake of this account we can think of separable metric spaces). Chief among these were the small and large inductive dimensions $ind$ and $Ind$. These maintained the intuitive appeal of Poincar\'e's original description and made precise the meaning of the statement "$\mathbb{R}^{n}$ is $n$-dimensional". 
\vspace{\baselineskip}

Around the same time as Poincar\'e's writing, Lebesgue described the dimension of the cubes $[0,1]^{n}$ in a different, but still intuitive way. Namely, if one covers $[0,1]$ by open intervals one can always find a refinement of that covering that is also made up of open intervals such that every point of the interval is contained in at most two elements of the refinement. In the case of $[0,1]^{2}$, every covering by open squares can be refined by a covering of smaller open squares such that every point of $[0,1]^{2}$ is contained in at most three elements of the refinement. Almost twenty years later, Cech would give a general formulation for this idea applicable to more general spaces. We now call this definition the covering or topological dimension $dim$. Explorations of the definitions based on Poincar\'e's ideas and Lebesgue's ideas would continue throughout the twentieth century with one landmark result being that, on the class of separable metric spaces the dimension functions $ind$, $Ind$, and $dim$ agree. This history and the surrounding rich theory can be found in the excellent books by by Hurewicz and Wallman \cite{Hurewicz}, Engelking \cite{Engelking}, and Pears \cite{Pears}. 
\vspace{\baselineskip}

If one takes the primary task of early dimension theory to be to formalize a geometrically intuitive definition of the dimension of topological spaces (as we do), then we can safely say that the task was completed with great success. However, we can now reflect on this success and also reflect again on the concept of dimension and ask

\begin{displayquote}
\emph{Must every geometrically intuitive definition of dimension for nice topological spaces agree with the classical definitions?}
\end{displayquote}

\noindent
The question is not decisively answerable, but reflecting on it can provide some insights into our idea of dimension. If the answer is "yes" then we might safely believe that our geometrically intuitive definitions are the "right" definitions in the sense that any definition of dimension that seems acceptable to us on the level of geometric intuition must agree with the classical definitions on nice topological spaces. However, if the answer is "no", that is if there is a space for which we think there ought to be a clearly definable dimension and two different intuitive definitions of dimension disagree on that space, then what does that say about our ideas of what dimension is? We will ultimately challenge the reader to attempt to answer this question for themselves, but our main objective in this paper is to argue that the answer to our question is "no".
\vspace{\baselineskip}

We do this by providing a geometrically intuitive definition for the dimension of Hausdorff spaces based on a conception of dimension that predates that of both Lebesgue and Poincar\'e. It appears in the $1884$ novella \emph{Flatland} by Edwin Abbott. \emph{Flatland} is the story of an inhabitant (a square) of the eponymous two dimensional land and his experiences with realms of both higher and lower dimensions. The novella is well known for its presentation of dimension. Surprisingly, however, a rigorous treatment of the ideas does not seem to have been done. In contrast to the conceptions of dimension by Poincar\'e and Lesbegue which were based on separation and covers, Abbott's notion of dimension is based on "sight". More specifically, to reside in a higher dimensional space one must be able to "see inside of" lower dimensional spaces.
\vspace{\baselineskip}

 We attempt to provide a rigorous formulation of this idea for Hausdorff spaces and call the resulting topological invariant Abbott dimension, denoted $Ab(X)$ for a space $X$. In section $2$ we will review some basic preliminary definitions and results, as well as describe the bare minimum of properties that a definition of dimension ought to satisfy. In section $3$ we will define Abbott dimension. Part of this section will include an explanation of the notion of the aforementioned "sight" as seen in \emph{Flatland} and how it is formalized in our definition. Also in section $2$ are the primary basic properties of $Ab$, such as topological invariance and the fact that $Ab(\mathbb{R}^{n})=n$. In section $4$ we show that $Ab$ does not agree with the three classical definitions of dimension on separable metric spaces (in fact compact metric spaces) by showing that the Abbott dimension of all of what are called hereditarily indecomposable continua is $1$. Along the way we give some detailed descriptions of some well-known examples. We conclude in section $5$ with a brief discussion and a challenge to the reader. An appendix containing some technical discussions and arguments is included after the references. Throughout this paper we will use some basic concepts from point set topology without defining them. In particular we will use terms such as Hausdorff, normal, and compact. A reader unfamiliar with these terms may still read this paper and understand its main ideas. The inexperienced reader should regard these terms and concepts as "legal technicalities" that must be met to make mathematical statements rigorously provable.

\section{Preliminaries.}

In this section we will collect the basic definitions and results that will be needed to precisely state and prove relationships between Abbott dimension and classical definitions of dimension. We will also use this section to describe the large inductive dimension (which is one of the classical definitions of dimension) that we will compare to our definition of Abbott dimension. The large inductive dimension reifies the notion of dimension by Poincar\'e that is characterized by separation. That is, it makes precise the idea that an $n$-dimensional space is one that requires an $(n-1)$-dimensional subspace to be removed in order to become disconnected. We will ultimately use the extant results we state here for the large inductive dimension to show that the Abbott dimension (which we have not yet defined) of $\mathbb{R}^{n}$ is $n$. A thorough treatment of the large inductive dimension can be found in \cite{Engelking}. A reader comfortable with classical dimension theory may safely forego this section. For a subspace $Y$ of a topological space $X$ we denote by $cl_{X}(Y)$ the closure of $Y$ in $X$ and by $\partial_{X}Y$ the boundary of $Y$ in $X$.

\begin{definition}
By a {\bf continuum} we mean a (not necessarily metric) compact, connected, Hausdorff Space. In the event that such a space is a single point we call it a {\bf degenerate continuum}.
\end{definition}

\begin{definition}
Given a topological space $X$ and two disjoint closed subsets $A,B\subseteq X$, a {\bf separator} in $X$ between $A$ and $B$ is a closed set $C$ that is disjoint from $A\cup B$ and is such that $X\setminus C=U\cup V$ where $U$ and $V$ are disjoint open sets that contain $A$ and $B$, respectively. 
\end{definition}

The above definition makes precise what it means to cut a space between two regions therein. In the following definition it is hopefully apparent that with a different notion of "separator" one may obtain a different definition of dimension based on the idea of separation\footnote{Alternative formulations of what a separator could be do exist. Indeed, in what was perhaps the first precise formulation of dimension, Brouwer in \cite{brouwer} defined his invariant, the dimensiongrad, of a space. This invariant is identical in definition to the large inductive dimension we have defined with the difference being that instead of using separators it uses the notion of a \emph{cut}. A cut in a space $X$ between disjoint closed subsets $A$ and $B$ is a third closed set $C\subseteq X$ that is disjoint from $A$ and $B$ and is such that any continuum $K\subseteq X$ that intersects $A$ and $B$ must also intersect $C$. This definition of dimension agrees with the classical definitions mentioned in the introduction on the class of compact metrizable spaces (see \cite{fedorchuk}).}.

\begin{definition}
The {\bf large inductive dimension} of a normal space $X$, denoted $Ind(X)$, is defined inductively in the following way:\\
\begin{enumerate}
\item $Ind(X)=-1$ if and only if $X=\emptyset$.
\item $Ind(X)\leq n$ for $n\geq 0$ if and only if for every pair of disjoint closed sets $A,B\subseteq X$ there is a separator $C\subseteq X$ between $A$ and $B$ that satisfies $Ind(C)\leq n-1$. 
\end{enumerate}
\noindent
The value $Ind(X)$ is then defined to be the least $n$ for which $Ind(X)\leq n$ is true. In the event that $Ind(X)\not\leq n$ for each $n$ we say that $Ind(X)=\infty$. 
\end{definition}

In the Introduction we briefly mentioned that there are some properties that a definition of dimension absolutely must satisfy to be acceptable as a suitable definition. Here we state those properties in the context of the large inductive dimension. We will make use of these properties of $Ind$ to show the analogous properties for Abbott dimension in the next section.

\begin{theorem}[Topological Invariance]\label{invarianceInd}
If $X$ and $Y$ are normal spaces that are homeomorphic, then $Ind(X)=Ind(Y)$. 
\end{theorem}

Topological invariance of dimension reflects the intuition we have that continuous deformations should not alter dimension.

\begin{theorem}[The Subspace Theorem]\label{subspaceInd}
If $X$ is a separable metric space and $Y\subseteq X$ is any subspace, then $Ind(Y)\leq Ind(X)$. 
\end{theorem}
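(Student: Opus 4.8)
The plan is to reduce this to the analogous, and much easier, monotonicity of the \emph{small} inductive dimension $ind$, and then to quote the classical coincidence theorem. Recall that $ind(X)=-1$ precisely when $X=\emptyset$, and that $ind(X)\le n$ when every point of $X$ has arbitrarily small open neighbourhoods whose boundaries have $ind\le n-1$. The first step is to prove, by induction on $n=ind(X)$, that $ind(Y)\le ind(X)$ for every subspace $Y$ of a separable metric space $X$. The case $n=-1$ is immediate. For the inductive step, fix $y\in Y$ and an open neighbourhood $U$ of $y$ in $Y$, write $U=U'\cap Y$ with $U'$ open in $X$, and choose an open $V'\subseteq X$ with $y\in V'\subseteq U'$ and $ind(\partial_X V')\le n-1$. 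Setting $V=V'\cap Y$, one checks the containment $\partial_Y V\subseteq(\partial_X V')\cap Y$ by a routine computation: $cl_Y(V)=cl_X(V)\cap Y\subseteq cl_X(V')\cap Y$, and $V$ is open in $Y$, so $\partial_Y V=cl_Y(V)\setminus V\subseteq(cl_X(V')\setminus V')\cap Y=(\partial_X V')\cap Y$. Since $(\partial_X V')\cap Y$ is a subspace of the separable metric space $\partial_X V'$, which has $ind\le n-1$, the inductive hypothesis gives $ind(\partial_Y V)\le n-1$, as needed.

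The second step is to invoke the fundamental coincidence theorem of classical dimension theory: on a separable metric space, $ind$, $Ind$, and $dim$ all agree (see \cite{Hurewicz}, \cite{Engelking}). Every subspace of a separable metric space is separable metric, so this applies to both $X$ and $Y$, giving $Ind(Y)=ind(Y)$ and $Ind(X)=ind(X)$. Combining with Step 1 yields $Ind(Y)=ind(Y)\le ind(X)=Ind(X)$.

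I expect the only genuine obstacle to be the coincidence theorem itself, which I will not reprove but simply cite; everything else is the elementary induction of Step 1, whose sole content is the boundary inclusion $\partial_Y(V'\cap Y)\subseteq(\partial_X V')\cap Y$. It is worth noting why one cannot avoid the coincidence theorem by running an induction directly on $Ind$: given a closed set $A\subseteq Y$ and open $U\subseteq Y$ with $A\subseteq U$, there is in general no closed subset of $X$ extending $A$ and contained in an open extension of $U$, since $cl_X(A)$ may meet the complement of that extension outside $Y$. This failure is precisely why $Ind$ is not monotone for subspaces of arbitrary normal spaces, so the separability hypothesis is essential and enters exactly through the coincidence theorem.
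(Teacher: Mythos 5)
Your proof is correct. Note that the paper does not prove this statement at all --- it is quoted as a classical fact from Engelking's book --- so there is no in-paper argument to compare against; your route (prove monotonicity of $ind$ for arbitrary subspaces by the elementary induction on the boundary inclusion $\partial_Y(V'\cap Y)\subseteq(\partial_X V')\cap Y$, then transfer to $Ind$ via the coincidence theorem $ind=Ind=dim$ on separable metric spaces) is exactly the standard textbook derivation, and every step checks out. One small caveat about your closing remark: separability is essential for \emph{this particular proof} (the coincidence theorem fails for general metric spaces, e.g.\ Roy's space has $ind=0<1=Ind$), but it is not essential for the truth of the conclusion, since $Ind$ is in fact monotone on arbitrary subspaces of all metric spaces (via the Kat\v{e}tov--Morita theorem $Ind=dim$ and the monotonicity of $dim$); so "essential" overstates the situation, though this does not affect the validity of your argument for the theorem as stated.
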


Certainly pulling a higher dimensional subspace out of a lower dimensional one should not be possible.

\begin{theorem}[Euclidean Dimension]\label{euclideanInd}
For each $n$, $Ind(\mathbb{R}^{n})=n$.
\end{theorem}

This is perhaps the most important property that a definition of dimension must satisfy. Indeed, any property of topological spaces that does not meet this requirement cannot be reasonably called dimension.
\vspace{\baselineskip}

In addition to those stated above, we will make use of two additional properties of $Ind$:

\begin{theorem}\label{dim0}
If a separable metric space $X$ is such that $Ind(X)=0$, then $X$ is totally disconnected. 
\end{theorem}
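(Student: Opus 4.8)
The plan is to prove the statement directly, by showing that for each point $x\in X$ the connected component of $x$ is $\{x\}$. The key observation is that the hypothesis $Ind(X)\leq 0$ supplies, around any point and missing any other prescribed point, an open set whose boundary is empty, and such a set is clopen.

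First I would fix two distinct points $x,y\in X$. Since $X$ is metric it is in particular $T_{1}$, so $\{x\}$ is closed and $U:=X\setminus\{y\}$ is an open set containing $\{x\}$. Applying the hypothesis $Ind(X)\leq 0$ to the pair $\{x\}\subseteq U$, I obtain an open set $V\subseteq X$ with $\{x\}\subseteq V\subseteq\overline{V}\subseteq U$ and $Ind(\partial V)\leq -1$; by the definition of $Ind$ the latter means exactly that $\partial V=\emptyset$. Hence $\overline{V}=V\cup\partial V=V$, so $V$ is closed as well as open. Thus $V$ is a clopen subset of $X$ with $x\in V$ and, since $V\subseteq U$, with $y\notin V$.

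Next I would use this clopen set to pin down the component $C$ of $x$. As $V$ is clopen in $X$, the set $C\cap V$ is relatively clopen in the connected space $C$, and it is nonempty because $x\in C\cap V$; therefore $C\cap V=C$, i.e.\ $C\subseteq V$, and in particular $y\notin C$. Since $y$ was an arbitrary point different from $x$, this forces $C=\{x\}$, and since $x$ was arbitrary, every component of $X$ is a singleton — that is, $X$ is totally disconnected. I do not anticipate a genuine obstacle here; the only points requiring care are the bookkeeping in the inductive definition (that $Ind(\partial V)\leq -1$ is the same as $\partial V=\emptyset$, whence an open set with empty boundary is clopen) and the standard fact that a connected set meeting a clopen set is contained in it. Note that separability and metrizability enter only to guarantee that $X$ is normal, so that $Ind$ is defined, and that $X$ is $T_{1}$; the argument itself works verbatim for any normal $T_{1}$ space with $Ind\leq 0$.
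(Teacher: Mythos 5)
Your proof is correct. The paper itself gives no argument for this statement --- it is quoted from Engelking along with the other background facts on $Ind$ --- and your argument is exactly the standard one: $Ind(X)\leq 0$ applied to the pair $\{x\}\subseteq X\setminus\{y\}$ produces an open set with empty boundary, hence a clopen set separating $x$ from $y$, which forces every component to be a singleton. Your closing remark is also right: separability and metrizability play no role beyond guaranteeing normality and $T_{1}$, so the statement holds for any normal $T_{1}$ space with $Ind\leq 0$.
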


\begin{theorem}\label{euclideanInd2}
For every $K\subseteq\mathbb{R}^{n}$ we have that $Ind(K)=n$ if and only if $K$ has nonempty interior in $\mathbb{R}^{n}$. 
\end{theorem}

Computing the large inductive dimension of a space can be quite difficult, even for spaces that are well behaved. In particular, establishing Theorem \ref{euclideanInd} is quite difficult. However, once theorems such as Theorem \ref{euclideanInd2} have been established, computing the large inductive dimension of some Euclidean subspaces becomes much easier, as is the case for the "Sierpinski carpet", described below.

\begin{example}
Let $F_{0}$ be the subspace $[0,1]\times[0,1]$ of $\mathbb{R}^{2}$. Obtain $F_{1}$ from $F_{0}$ by dividing $F_{1}$ into the nine congruent squares $\left[\frac{i}{3},\frac{i+1}{3}\right]\times\left[\frac{j}{3},\frac{j+1}{3}\right]$ for $0\leq i,j\leq 2$ and removing the "center square", $\left[\frac{1}{3}.\frac{2}{3}\right]\times\left[\frac{1}{3},\frac{2}{3}\right]$. Then $F_{1}$ is a union of eight identical squares in $\mathbb{R}^{3}$. Obtain $F_{2}$ by repeating this process of deleting the center square from each of the remaining squares in $F_{1}$. In like fashion, for each $n\in\mathbb{N}$ we can construct a subspace $F_{n}\subseteq F_{n-1}$ by this process of deletion. The "Sierpinski carpet" is the limit space $X=\bigcap_{n=0}^{\infty}F_{n}$ with the subspace metric inherited from $\mathbb{R}^{2}$. It is easy to see that $X$ is a connected space that does not contain any open subset of $\mathbb{R}^{2}$ and so by Theorem \ref{dim0} and Theorem \ref{euclideanInd2} we have that $Ind(X)=1$. What's more, $X$ is what is called a universal space for separable metric spaces of large inductive dimension $1$ which means that every separable metric space $Y$ with $Ind(Y)=1$ embeds into $X$. Four stages of the construction of the Sierpinski carpet are shown below. The space and its universality were originally discussed by Sierpinski in \cite{sierpinski}. An English version of the construction and universality result can be found in \cite{Engelking}.
\end{example}

\begin{figure}[!h]
\centering
\includegraphics{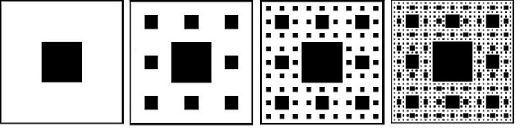}
\caption{Stages $F_{1},F_{2},F_{3},$ and $F_{4}$ in the construction of the Sierpisnki carpet.}
\end{figure}

\newpage
\section{Abbott Dimension.}
In this section we will define the Abbott dimension of Hausdorff spaces. We will show that this property is a topological invariant, that it satisfies the subspace theorem (the analog of Theorem \ref{subspaceInd} for Abbott dimension), and most importantly, we will show that the Abbott dimension of $\mathbb{R}^{n}$ is $n$. However, before doing any of this some discussion of the inspiration for our definition, Edwin Abbott's novella \emph{Flatland}, is warranted. 
\vspace{\baselineskip}

The story of \emph{Flatland} is broken up into two parts. The first part is an explanation of the eponymous country Flatland, a two dimensional world resembling $\mathbb{R}^{2}$, as told by the protagonist, a square. The society of Flatland is designed to be a satirical take on the culture of England at the time (the Victorian era). The second half of the story is the part that is most relevant for us. In this part the square dreams of a visit to the zero dimensional Pointland and the one dimensional Lineland. Between these dreams the square is visited by a sphere from Spaceland, a three dimensional world resembling $\mathbb{R}^{3}$, who to the square appears as a circle of varying diameter. The sphere takes the square on a journey to three dimensional Spaceland. Throughout the text one finds that the notion of being from a "higher dimension" means that you can see "inside" the bodies of lower dimensional beings. When the square has a vision of Lineland this amounts to the square being able to see the "internal organs" of the denizens of Lineland.

\begin{displayquote}

\emph{Though he had heard my voice when I first addressed him, the 
sounds had come to him in a manner so contrary to his experience that he had made no answer, "seeing 
no man,'' as he expressed it, "and hearing a voice as if it were from my own intestines.'' Until the moment 
when I placed my mouth in his World, he had neither seen me, nor heard anything except confused - 
sounds beating against - what I called his side, but what he called his inside or stomach; nor had he even 
now the least conception of the region from which I had come.\\
\null\hfill-The square describing the king of Lineland (pages 116-117 of \cite{Abbott})}
\end{displayquote}

\noindent
Similarly, when the sphere brings the square into three dimensional space this notion of dimension allows the square to see "over" the walls of the mines and caverns of Flatland.

\begin{displayquote}
\emph{Once more I felt myself rising through space. It was even as the Sphere had said. The further we receded 
from the object we beheld, the larger became the field of vision. My native city, with the interior of every 
house and every creature therein, lay open to my view in miniature. We mounted higher, and lo, the 
secrets of the earth, the depths of mines and inmost caverns of the hills, were bared before me.\\
\null\hfill-The square looking on Flatland from a three dimensional perspective (page 140 of \cite{Abbott})}
\end{displayquote}

\noindent
One can find other quotes that display the idea of being able to "see inside" things of lower dimensions from a higher dimensional perspective. The task of defining dimension using this idea is then to formalize this notion of "sight". We do so in the following way. Throughout the remainder of this paper all spaces are assumed to be Hausdorff. For points $x,y$ in a space $X$, not necessarily distinct, let $\mathcal{C}(x,y,X)$ be the collection of open connected subsets of $X$ that contain both $x$ and $y$.

\begin{definition}\label{lineofsightdef}
Let $X$ be a nonempty Hausdorff space, $x,z\in X$ distinct points, and $K\subseteq (X\setminus\{z\})$ a subspace that contains $x$. A continuum $C\subseteq X$ containing $x$ and $z$ is called a {\bf line of sight} from $z$ to $x$ if the following hold:\\
\begin{enumerate}
\item $\mathcal{C}(x,z,X)$ contains a neighborhood basis for $C$.
\item For all open sets $U\subseteq K$ containing $x$, there is an element $V_{U}\in\mathcal{C}(x,z,X)$ that contains $C$ and satisfies $V_{U}\cap K\subseteq cl_{X}(V_{U})\cap K\subseteq U$.
\end{enumerate}

\end{definition}

\begin{figure}[!h]
\centering
\includegraphics{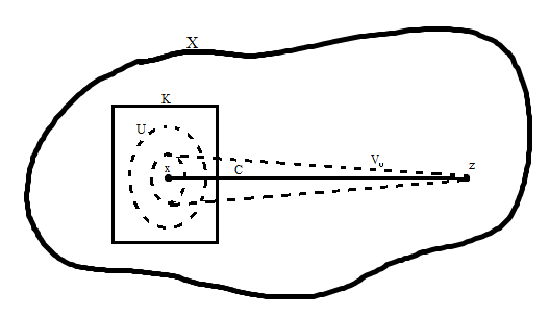}
\caption{A line of sight from $z$ to $x\in K$.}
\end{figure}

This is our formalization of what it means for a point in one space to be visible from a point outside of that space. One should think of the point $z$ in the above Definition as being the point from which the square looks on the interiors of the denizens of Lineland. Sticking with this analogy, the space $K$ is Lineland itself. The two conditions of the definition are meant to make precise the idea that we can "narrow our gaze" at the expense of visibility of the surrounding area. One may imagine looking at a spot on the wall. By squinting, one may focus their attention on the spot at the expense of being able to see some of the area at the periphery of our vision. Likewise, to do this focusing we do not need to change the line of sight from our eyes to the spot. With our definition we mean to formalize a point $x$ being \emph{directly} visible from $z$, as is conveyed in the figure above.

\begin{definition}\label{main definition}
Let $X$ be a nonempty Hausdorff space. We define the {\bf Abbott dimension} of $X$, denoted $Ab(X)$, in the following way. Let $X_{0}$ denote the power set of $X$. Assuming $X_{n}$ has been defined for $n\geq 0$ define $X_{n+1}$ to be the collection of subsets $Y\subseteq X$ for which there is some $K\in X_{n}$ such that:
\begin{enumerate}
\item $K\subseteq Y$.
\item There is a point $z\in Y\setminus K$ such that for all points $x\in K$ there is a line of sight $W_{z,x}\subseteq Y$ from $z$ to $x$ such that:

\begin{itemize}

\item There is a $V\in\mathcal{C}(x,z,Y)$ containing $W_{z,x}$ such that if $V^{\prime}\in\mathcal{C}(x,z,Y)$ contains $W_{z,x}$ and is contained in $V$, then there is a $D\subseteq\partial_{Y}(V^{\prime})$ such that $D\in X_{n}$.

\end{itemize}

\end{enumerate}
Then $Ab(X)$ is defined to be the greatest $n$ for which $X_{n}\neq\emptyset$. If $X_{n}\neq\emptyset$ for all $n$, then $Ab(X)$ is said to be infinite. Finally, we define $Ab(\emptyset)=-1$. 
\end{definition}

 \noindent
In the above definition if $Y_{n}\in X_{n}$ and $Y_{n-1}\in X_{n-1}$ is such that $Y_{n-1}$ can be used to show that $Y_{n}\in X_{n}$, then we say that $Y_{n-1}$ {\bf witnesses} that $Y_{n}\in X_{n}$, and denote this $Y_{n-1}\ll_{n} Y_{n}$. We also call the point $z$ used in item $(2)$ an {\bf observation point} for $K$.
\vspace{\baselineskip}

This definition says that if a space $X$ contains an $n$ dimensional subspace that it can "see inside of" from some higher dimensional perspective, then it must be at least $n+1$ dimensional.  
\vspace{\baselineskip}

\begin{example}
The Abbott dimension of a singleton is $0$. Similarly, the Abbott dimension of any totally disconnected space is $0$. Recall that a space is totally disconnected if the only non-empty connected subspaces are points. Let $X$ be totally disconnected. If $Y\subseteq X$ is such that $Y\in X_{n}$ for some $n\geq 1$, then there is a subspace $K\subseteq Y$, a point $x\in K$, a point $z\in Y\setminus K$, and a continuum $W_{z,x}\subseteq Y$ that contains both $x$ and $z$. However, the only connected subspaces of $X$, and consequently $Y$, are points. Therefore no such $W_{z,x}$ can exist and $X_{n}$ must be empty for all $n\geq 1$. This yields $Ab(X)=0$. 
\end{example}

A useful obvious fact is the following.

\begin{lemma}\label{descending sets}
Given a space and $k\in\mathbb{N}$ we have that $X_{k+1}\subseteq X_{k}$. 
\end{lemma}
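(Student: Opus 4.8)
The plan is to induct on $k$, exploiting the fact that the index $n$ enters the definition of $X_{n+1}$ in only two places: the clause "there is some $K\in X_{n}$" and the clause "$D\in X_{n}$" inside item $(4)$. Everything else in items $(1)$--$(4)$ — the inclusion $K\subseteq Y$, the observation point $z$, the continua $W_{x,z}$, the connected open sets $V_{U}$ together with their neighbourhood-basis property, and the choice of the open neighbourhood $U$ in item $(4)$ — is phrased with no reference to $n$ whatsoever. So the whole statement should reduce to a monotonicity bookkeeping argument.

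First I would dispose of the base case $k=0$. Since $X_{0}$ is by definition the full power set of $X$ and every member of $X_{1}$ is by construction a subset of $X$, the inclusion $X_{1}\subseteq X_{0}$ is immediate and uses none of items $(1)$--$(4)$.

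Next, for the inductive step I would fix $k\geq 1$, assume $X_{k}\subseteq X_{k-1}$, and take an arbitrary $Y\in X_{k+1}$. Choosing $K\in X_{k}$ with $K\ll_{k+1}Y$ — so that items $(1)$--$(4)$ hold with $X_{k}$ playing the role of $X_{n}$ — I would verify that this same $K$ (with the same observation point $z$, the same continua $W_{x,z}$, and the same families of connected open sets) witnesses $Y\in X_{k}$. The inductive hypothesis gives $K\in X_{k-1}$, which is what is needed for $K$ to be an admissible witness at level $k$. Items $(1)$, $(2)$, $(3)$ then transfer verbatim because they never mention the index. For item $(4)$, given a family $\{V_{\alpha}\}_{\alpha\in S}$ as in item $(3)$, the same neighbourhood $U$ of $x$ in $K$ works, and for each $V_{\alpha}$ with $cl_{Y}(V_{\alpha})\cap K\subseteq U$ the same set $D\subseteq\partial_{Y}V_{\alpha}$ already satisfies $D\in X_{k}\subseteq X_{k-1}$ by the inductive hypothesis, hence $D\in X_{k-1}$ as required. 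Therefore $Y\in X_{k}$, and since $Y$ was arbitrary we conclude $X_{k+1}\subseteq X_{k}$, completing the induction.

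I do not anticipate any real obstacle: the content is entirely the downward monotonicity just described. The only two points that warrant a moment's care are (a) re-checking that the witnessing set $D$ of item $(4)$ still lies in the appropriate collection after lowering the index, which is exactly where the inductive hypothesis is invoked, and (b) making sure the base case genuinely is the trivial power-set observation rather than something that would require analysing item $(4)$.
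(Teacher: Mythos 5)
Your induction is correct and is exactly the argument one would write out: the index $n$ appears in the definition of $X_{n+1}$ only through the witness $K\in X_{n}$ and the boundary sets $D\in X_{n}$, so the inductive hypothesis lets the same witness data certify membership one level down, with the base case $X_{1}\subseteq X_{0}$ trivial since $X_{0}$ is the full power set. The paper states this lemma without proof as an "obvious fact," and your write-up is the natural formalization of that observation.
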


The definition of $Ab$, while inductive in nature, takes on a different flavor than the inductive dimensions $Ind$ and $ind$ in that, with the latter two dimension functions it is often easier to provide upper bounds for the dimension of space. However, with $Ab$ the opposite is true. For a space $X$, establishing $Ab(X)\geq n$ only requires finding a nested sequence of $n+1$ subspaces $Y_{0}\lll_{1}Y_{1}\ll_{2}\cdots\ll_{n}Y_{n}\subseteq X$. Indeed, for $\mathbb{R}^{3}$ one may use a point, then a line segment containing that point, then a square one of whose sides is the segment, and finally a cube having that square as a side to show that $Ab(\mathbb{R}^{3})\geq 3$. In general we have,

\begin{proposition}\label{lower bound}
For every $n\geq 1$, $Ab(\mathbb{R}^{n})\geq n$. 
\end{proposition}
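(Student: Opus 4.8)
The plan is to prove this by explicit construction, exhibiting a chain $Y_0 \lll_1 Y_1 \ll_2 \cdots \ll_n Y_n \subseteq \mathbb{R}^n$ of witnesses, just as the paragraph preceding the statement suggests for $n=3$. The natural candidates are the "flags" of nested faces: take $Y_0 = \{0\}$, $Y_1 = [-1,1] \times \{0\}^{n-1}$, $Y_2 = [-1,1]^2 \times \{0\}^{n-2}$, and in general $Y_k = [-1,1]^k \times \{0\}^{n-k}$, so that $Y_n = [-1,1]^n$. Since $Ab(\mathbb{R}^n)$ is the greatest $m$ with $X_m \neq \emptyset$, it suffices to show $Y_n \in X_n$, and by the definition of $X_n$ it is enough to verify $Y_{k-1} \ll_k Y_k$ for each $k = 1, \dots, n$; the base case $Y_0 \in X_0$ is trivial since $X_0$ is the full power set.

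So the core of the argument is the inductive step: assuming $Y_{k-1} = [-1,1]^{k-1} \times \{0\}^{n-k+1} \in X_{k-1}$, check that $K := Y_{k-1}$ and the space $Y := Y_k = [-1,1]^k \times \{0\}^{n-k}$ satisfy items (1)--(4). Item (1), $K \subseteq Y$, is immediate. For item (2), choose the observation point $z = (0,\dots,0,1,0,\dots,0)$ (the unit vector in the $k$-th coordinate, which lies in $Y \setminus K$); for any $x \in K$ the straight segment from $x$ to $z$ is a continuum $W_{x,z} \subseteq Y$. For item (3), given an open neighborhood $U$ of $x$ in $K$, I would take $V_U$ to be a small open "prism" in $Y$ of the form $(B \cap (-1,1)^{k-1}) \times (-\epsilon, 1+\epsilon)$ in the $k$-th coordinate, where $B$ is a small ball around the first $k-1$ coordinates of $x$ chosen so that $V_U \cap K \subseteq U$; such a prism is open and connected in $Y$, contains $W_{x,z}$, and as $B$ shrinks these prisms form a neighborhood basis of the segment $W_{x,z}$ in $Y$ (using that $Y$ is locally connected and the segment is compact). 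For item (4), given a family $\{V_\alpha\}$ of such prisms, the boundary $\partial_Y V_\alpha$ contains a copy of a $(k-1)$-dimensional box, e.g. the "end cap" $\overline{B} \cap (-1,1)^{k-1} \times \{$ the $k$-th-coordinate level $\}$ translated into the slab, which is affinely a $(k-1)$-dimensional box and hence lies in $X_{k-1}$ by the inductive hypothesis (Abbott dimension is a topological invariant, per Section 2, so any homeomorph of $Y_{k-1}$ is in $X_{k-1}$).

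The step I expect to be the main obstacle is item (3), specifically the combined requirement that a single continuum $W_{x,z}$ works for \emph{all} neighborhoods $U$ of $x$ simultaneously, that each $V_U$ contains $W_{x,z}$ \emph{and} satisfies the sandwich $x \in V_U \cap K \subseteq cl_Y(V_U) \cap K \subseteq U$, \emph{and} that $\mathcal{C}(x,z,Y)$ contains a neighborhood basis for $W_{x,z}$ in $Y$. Making all three compatible requires choosing the prisms carefully so that their closures still pinch down onto $U$ in the $K$-direction while remaining connected and trapping the fixed segment; the segment $W_{x,z}$ must be "thin" enough in the transverse directions that arbitrarily thin prisms around it still contain it, which is why a straight segment (rather than some wandering continuum) is the right choice. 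A secondary subtlety is bookkeeping in item (4): one must confirm that the \emph{particular} neighborhood $U$ extracted there can be taken to be, say, the projection of $\bigcap V_\alpha$'s transverse balls, and that the boundary piece $D$ genuinely sits inside $\partial_Y V_\alpha$ rather than merely near it — this is where working with boxes with their honest topological boundaries in the subspace $Y$ (not in $\mathbb{R}^n$) matters. Once the geometry of one inductive step is pinned down, the rest is a routine induction on $k$.
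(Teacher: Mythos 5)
Your overall strategy is exactly the one the paper intends: the paper states Proposition \ref{lower bound} with no proof at all, relying on the informal flag description (point, segment, square, cube) in the preceding paragraph, and your nested boxes $Y_k=[-1,1]^k\times\{0\}^{n-k}$ with observation point $z=e_k$ are the natural formalization of that. However, as written your key step fails at exactly the place you flagged. The straight segment $W_{x,z}$ from $x=(x',0,\dots,0)$ to $z=e_k$ is parametrized by $\bigl((1-t)x',t,0,\dots,0\bigr)$, so its projection onto the first $k-1$ coordinates sweeps out the entire segment from $x'$ to the origin. Hence a prism $\bigl(B\cap(-1,1)^{k-1}\bigr)\times(-\epsilon,1+\epsilon)$ with $B$ a \emph{small} ball about $x'$ does not contain $W_{x,z}$ unless $x'=0$; and if you enlarge $B$ to contain the projected segment, then $V_U\cap K$ contains a whole neighborhood in $K$ of the segment from $x$ to the origin, destroying the pinching condition $cl_Y(V_U)\cap K\subseteq U$. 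The two requirements are incompatible for product-shaped neighborhoods. The fix is to abandon prisms and take $V$ to be the open $\delta$-neighborhood of $W_{x,z}$ inside $Y_k$: this is open, connected (a union of convex balls along a connected set), contains the segment, and a short computation shows a point $(y',0,\dots,0)\in K$ lies in it only if $|y'-x'|<\delta(1+|x'|)$, so its closure still pinches onto $U$ as $\delta\to 0$; these tubes also witness that $\mathcal{C}(x,z,Y_k)$ contains a neighborhood basis of the compact set $W_{x,z}$.

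A second, subtler gap is in your treatment of item (4). The definition quantifies over \emph{every} collection $\{V_\alpha\}$ in $\mathcal{C}(x,z,Y_k)$ satisfying item (3), not merely the family you constructed; such a collection may contain an arbitrary open connected $V\supseteq W_{x,z}$ whose trace on $K$ is tiny (even $\{x\}$), and for any choice of $U$ that $V$ passes the filter $cl_{Y_k}(V)\cap K\subseteq U$, so you must still produce $D\subseteq\partial_{Y_k}V$ with $D\in X_{k-1}$. For your own tubes the boundary visibly contains a $(k-1)$-cell, and membership of any homeomorph of $Y_{k-1}$ in $X_{k-1}$ does follow from the inductive hypothesis together with the fact (implicit in the proofs of Theorems \ref{subspace theorem} and \ref{invariance}) that membership in $X_{k-1}$ depends only on the homeomorphism type of the subset. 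But for an arbitrary such $V$ you need a separate argument that $\partial_{Y_k}V$, which separates $[-1,1]^k$, contains a subset of Abbott dimension $k-1$ --- and the paper's own Section 4 shows that high separating power ($Ind\geq k-1$) does not by itself force high Abbott dimension. This point is not addressed by your proposal (nor, to be fair, by the paper, which omits the proof entirely), and it is where the real work of the proposition lies.
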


The next two results are, at a glance, seemingly obvious. The first, Lemma \ref{subspaces witness}, says that if one can see inside of of a subspace $Y\subseteq X$ from an observation point $z\in X\setminus Y$ and $K\subseteq Y$, then there should also be an observation point in $X\setminus K$ from which one can see inside of $K$. Said perhaps more simply, if you draw a square on the wall, you can see the interior of the square, and you can see the interior of any of the line segments that make up the sides of the square. The second result, Proposition \ref{intermediate dimensions}, says that a space of finite dimension should contain subspaces of every possible smaller dimension. Again, this is likely obvious primarily because of our intuition regarding dimension. However, even in the case of the classical definitions of dimension a nontrivial proof is required.\footnote{Interestingly, for the classical definitions of dimension the result does not hold if the space is infinite dimensional. In \cite{henderson} a compact metric space of infinite dimension (with respect to any of the classical definitions) is constructed such that every subspace is either also infinite dimensional, or zero dimensional.}

\begin{lemma}\label{subspaces witness}
Let $X$ be a space with $Ab(X)=n$ and $Y\subseteq X$ be such that $Y\ll_{n} X$. If $K\subseteq Y$ with $K\in X_{k}$ for some $k\leq n-1$, then $K\ll_{k+1} X$.

\end{lemma}
\begin{proof}
We will prove that $K\ll_{k+1} X$ by definition. Let $z\in X\setminus Y$ be an observation point for $Y$ and let $x\in K$ be given. Because $K\subseteq Y$ and $Y\ll_{n} X$ there is a continuum $W_{z,x}\subseteq X$ that contains both $x$ and $z$. Also $W_{z,x}$ can be chosen so that $\mathcal{C}(x,z,X)$ contains a neighborhood basis for $W_{z,x}$. Let $U\subseteq K$ be an open subset of $K$ that contains $x$. Then $U=U^{\prime}\cap K$ for some subset $U^{\prime}\subseteq Y$ that is open in $Y$. Then, there is a $V_{U^{\prime}}\in\mathcal{C}(x,z,X)$ containing $W_{z,x}$ and satisfying $V_{U^{\prime}}\cap Y\subseteq cl_{X}(V_{U^{\prime}})\cap Y\subseteq U^{\prime}$. Intersecting all of these with $K$ yields

\[V_{U^{\prime}}\cap K\subseteq cl_{X}(V_{U^{\prime}})\cap K\subseteq U^{\prime}\cap K=U\]

\noindent
which establishes item $(2)$ of the Definition of a line of sight. The bullet point of item $(2)$ of the definition for Abbott dimension holds because it is a property of $W_{z,x}$ that is independent of $Y$ and $K$. We have then shown that $X\in X_{k+1}$ is witnessed by $K$. That is, $K\ll_{k+1} X$.

\end{proof}

\begin{proposition}\label{intermediate dimensions}
If $X$ is a space with $Ab(X)=n\geq 0$, then for all $0\leq k\leq n$ there is a $Y\subseteq X$ such that $Ab(Y)=k$. 
\end{proposition}
\begin{proof}
If $n=0$ then the result is trivial. Otherwise assume that $Ab(X)=n>0$ and assume that $k$ is such that $1\leq k\leq n-1$ and there is no $Y\subseteq X$ satisfying $Ab(Y)=k$. This means that if $Y\subseteq X$ is any nonempty subset, then either $Ab(Y)<k$ or there is a $K\subseteq Y$ such that $K\in X_{k+1}$. As $Ab(X)=n>k$ we have that $X_{n}$ must not be empty, which by Lemma \ref{descending sets} gives us that $X_{k+i}$ is nonempty for $0\leq i\leq n-k$. We claim that $X_{k+1}=X_{k+i}$ for all $1\leq i$, which will contradict $Ab(X)$ being finite. If $Y\subseteq X$ is such that $Y\in X_{k+1}$, then there is some $K\subseteq Y$ such that $K\in X_{k}$ and $K\ll_{k+1}Y$. By assumption we have that $Ab(K)>k$ so there must be some $K^{\prime}\subseteq K$ such that $K^{\prime}\in X_{k+1}$. As $K\ll_{k+1}Y$ we have by Lemma \ref{descending sets} that $K^{\prime}\in X_{k}$. Moreover, by Lemma \ref{subspaces witness} we have that $K^{\prime}\ll_{k+1}Y$. After a moments consideration of our assumption that every element of $X_{k}$ has a subset contained in $X_{k+1}$ and item $2$ of the Definition of Abbott dimension, we arrive at the fact that $K^{\prime}\ll_{k+2}Y$, showing that $Y\in X_{k+2}$. As $X_{k+2}\subseteq X_{k+1}$, this proves their equality. Now assume that $X_{k+1}=X_{k+j}$ for $1\leq j$ and let $Y\in X_{k+j}$ and $K\in X_{k+j-1}$ be such that $K\ll_{k+j}Y$. From the assumption that $X_{k+j-1}=X_{k+j}$ one can quickly deduce that, in fact, $K\ll_{k+j+1}Y$, yielding $Y\in X_{k+j+1}$. By induction and Lemma \ref{descending sets} we then have that $X_{k+1}=X_{k+i}$ for all $i\geq 1$. This however implies that $Ab(X)=\infty$, contradicting our assumption that $Ab(X)$ is finite. Therefore there must be some $Y\subseteq X$ such that $Ab(Y)=k$. 

\end{proof}

\noindent
The next two results are easy consequences of the definition of Abbott dimension, but are none the less part of the bare minimum of the properties a dimension function ought to satisfy.

\begin{theorem}\label{subspace theorem}
If $Y$ is a subspace of a space $X$, then $Ab(Y)\leq Ab(X)$.
\end{theorem}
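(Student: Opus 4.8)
The plan is to prove the following purely set-theoretic strengthening, from which the theorem follows at once: with $X_n$ (respectively $Y_n$) denoting the families of subsets used in the definition of $Ab(X)$ (respectively $Ab(Y)$), one has $Y_n\subseteq X_n$ for every $n\geq 0$. Granting this, since $Y_{k+1}\subseteq Y_k$ the set $\{n:Y_n\neq\emptyset\}$ is an initial segment of $\mathbb{N}$ whose supremum is $Ab(Y)$; the inclusions $Y_n\subseteq X_n$ then give $\{n:Y_n\neq\emptyset\}\subseteq\{n:X_n\neq\emptyset\}$, hence $Ab(Y)\leq Ab(X)$. The case $Y=\emptyset$ is trivial since $Ab(\emptyset)=-1$.

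The inclusions $Y_n\subseteq X_n$ would be proved by induction on $n$. The base case is immediate: $Y_0$ is the power set of $Y$, $X_0$ is the power set of $X$, and every subset of $Y$ is a subset of $X$. For the inductive step, assume $Y_n\subseteq X_n$ and let $A\in Y_{n+1}$; thus there are a set $K\in Y_n$ with $K\subseteq A$, an observation point $z\in A\setminus K$, continua $W_{x,z}\subseteq A$, connected open sets $V_U\in\mathcal{C}(x,z,A)$, and subsets $D\subseteq\partial_{A}V_\alpha$ lying in $Y_n$, all satisfying clauses (1)--(4). The crucial observation is that every topological notion occurring in those clauses --- the continua $W_{x,z}$, the openness and connectedness of the members of $\mathcal{C}(x,z,A)$, the neighbourhoods $U$ of $x$ in $K$, and the operators $cl_{A}(\cdot)$ and $\partial_{A}(\cdot)$ --- is computed inside $A$ (or inside its subset $K$), never inside the ambient space. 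Since $A\subseteq Y\subseteq X$, transitivity of the subspace topology shows that $A$ carries the same topology whether regarded inside $Y$ or inside $X$, and similarly for $K$; consequently the very same witnessing data verifies clauses (1)--(4) when $A$ is tested for membership in $X_{n+1}$. The only two points at which the definition appeals to the ambient families are the requirements $K\in X_n$ and $D\in X_n$, and both are supplied by the inductive hypothesis applied to $K\in Y_n$ and to each $D\in Y_n$. Hence $A\in X_{n+1}$, which closes the induction.

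I do not expect a genuine obstacle here; the step that needs care is the bookkeeping in the inductive step, namely checking clause by clause that membership in $X_{n+1}$ is governed solely by the internal topology of the candidate set together with the membership in $X_n$ of the witnessing set $K$ and of the boundary pieces $D$. Once this is confirmed, any witness constructed inside $Y$ transfers unchanged to a witness inside $X$, and the inequality $Ab(Y)\leq Ab(X)$ follows.
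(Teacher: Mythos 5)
Your proposal is correct and follows essentially the same route as the paper: both arguments rest on the observation that every topological ingredient in the definition of $X_{n}$ (the continua, the open connected sets, the closures and boundaries) is computed inside the candidate subset itself, so any witnessing data for membership in $Y_{n}$ serves verbatim for membership in $X_{n}$. Your version merely makes explicit the induction on $n$ that the paper's terser proof leaves implicit, which is a welcome clarification rather than a different method.
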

\begin{proof}
Let $X$ be a space and $Y\subseteq X$ be given. Let $K_{1},K_{2}\subseteq Y$ be such that $K_{1}\ll_{n}K_{2}$. Then $K_{1}\in Y_{n-1}$ and $K_{2}\in Y_{n}$. Continua in $K_{2}$ are continua regardless of whether or not $K_{2}$ is considered as a subspace of $Y$ or $X$. Similarly, open connected subsets of $K_{2}$ are open connected subsets of $K_{2}$ regardless of whether $K_{2}$ is considered as a subset of $Y$ or $X$. Therefore $K_{2}\in X_{n}$ and $K_{1}\in X_{n-1}$. Moreover $K_{1}\ll_{n}K_{2}$ as subsets of $X$. Therefore if $n$ is the greatest natural number for which $Y_{n}\neq\emptyset$ then $X_{n}\neq\emptyset$ as well. Therefore $Ab(Y)\leq Ab(X)$. 
\end{proof}

\begin{theorem}\label{invariance}
If $X$ and $Y$ are homeomorphic spaces, then $Ab(X)=Ab(Y)$. 
\end{theorem}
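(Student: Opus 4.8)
The plan is to show that a homeomorphism $h\colon X\to Y$ transports the entire hierarchy of collections used to define Abbott dimension. Write $X_{0},X_{1},\ldots$ for the collections of subsets of $X$ produced by the definition, and $Y_{0},Y_{1},\ldots$ for the analogous collections for $Y$. The core claim is that for every $n\in\mathbb{N}$ and every $A\subseteq X$ one has $A\in X_{n}$ if and only if $h(A)\in Y_{n}$, and I would prove this by induction on $n$. Granting it, the fact that $A\mapsto h(A)$ is a bijection from the power set of $X$ to that of $Y$ gives $X_{n}\neq\emptyset$ if and only if $Y_{n}\neq\emptyset$ for every $n$; hence the largest $n$ with $X_{n}\neq\emptyset$ coincides with the largest such $n$ for $Y$ (and the ``infinite'' case is precisely the assertion that no such largest $n$ exists on either side). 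Since homeomorphic spaces are simultaneously empty or nonempty and $Ab(\emptyset)=-1$, this yields $Ab(X)=Ab(Y)$.

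For the base case, $X_{0}$ is the power set of $X$ and $Y_{0}$ is the power set of $Y$, and $A\mapsto h(A)$ is a bijection between them because $h$ is. For the inductive step, assuming the claim for $n$, I would verify that $h$ carries each of the four conditions $(1)$--$(4)$ defining membership in $X_{n+1}$ to the corresponding condition for $Y_{n+1}$, and that $h^{-1}$ does the same in reverse; this is where essentially all of the (routine) work sits. It rests on the standard observations that $h$ and $h^{-1}$ preserve the subset and membership relations, open sets, connected sets, and compact sets, hence continua (connectedness, compactness, and the Hausdorff property are all topological), as well as closures and boundaries taken in any subspace, i.e. $h\bigl(cl_{X'}(B)\bigr)=cl_{h(X')}\bigl(h(B)\bigr)$ and $h\bigl(\partial_{X'}B\bigr)=\partial_{h(X')}h(B)$ for $B\subseteq X'\subseteq X$, together with neighbourhoods and neighbourhood bases of points and of sets. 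In particular $h$ restricts to a bijection $\mathcal{C}(x,y,X')\to\mathcal{C}(h(x),h(y),h(X'))$ for every subspace $X'\subseteq X$ and points $x,y\in X'$. Tracing through the definition: a witness $K\in X_{n}$ for $A\in X_{n+1}$ corresponds to $h(K)\in Y_{n}$ by the inductive hypothesis; an observation point $z$ and the ``lines of sight'' $W_{x,z}$ correspond to $h(z)$ and $h(W_{x,z})$; the connected open sets $V_{U}$ of item $(3)$, with the required inclusions of closures and the neighbourhood-basis condition, correspond under $h$; and in item $(4)$ a subset $D\subseteq\partial_{Y'}V_{\alpha}$ with $D\in X_{n}$ corresponds to $h(D)\subseteq\partial_{h(Y')}h(V_{\alpha})$ with $h(D)\in Y_{n}$, again by the inductive hypothesis. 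Hence $A\in X_{n+1}$ implies $h(A)\in Y_{n+1}$, and applying the same reasoning to $h^{-1}$ gives the reverse implication.

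There is no real obstacle here; the only point demanding care is to confirm that \emph{every} notion invoked in clauses $(1)$--$(4)$ — including the somewhat awkwardly phrased requirement in item $(3)$ that the relevant $\mathcal{C}$-collection contain a neighbourhood basis for $W_{x,z}$ — is formulated purely in terms of the topologies of the subspaces involved, so that it commutes with $h$. Once this is checked clause by clause, the induction closes and the topological invariance of $Ab$ follows. (Alternatively, one could note that the argument in Theorem~\ref{subspace theorem} really shows that any topological embedding $Z\hookrightarrow W$ forces $Ab(Z)\le Ab(W)$, and apply this to $h$ and to $h^{-1}$; but the direct induction above is cleaner and self-contained.)
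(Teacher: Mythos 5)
Your proof is correct and is essentially the paper's argument spelled out in full: the paper simply observes that homeomorphisms preserve open connected sets, continua, and neighbourhood bases, which is exactly the clause-by-clause induction you carry out. The extra care you take in verifying that each condition in the definition is purely topological is a welcome expansion of what the paper leaves as ``immediate.''
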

\begin{proof}
This result is immediate upon noting that homeomorphisms are continuous bijections that preserve open connected sets, continua, and neighborhood bases.
\end{proof}

What remains to be shown for Abbott dimension is that it satisfies the most critical property of a dimension function, namely that the dimension of $\mathbb{R}^{n}$ is $n$. The proof itself is rather technical and we relegate the details of the argument to the appendix. However, we can give a rough overview of the argument here. To prove that $Ab(\mathbb{R}^{n})=n$ we make use of Proposition \ref{lower bound} to show that Abbott dimension is bounded above by the large inductive dimension on a class of separable metric spaces that includes $\mathbb{R}^{n}$ (also proved in the appendix). As is fairly common for results of this type, the cases of finite and infinite dimension are treated separately. In the finite case one uses induction on the Abbott dimension of the space and then assumes that the large inductive dimension is smaller than the Abbott dimension. In the infinite case one assumes that the space has infinite Abbott dimension and then uses induction on the large inductive dimension, showing that one arrives at a contradiction regardless of which finite value the large inductive dimension is assumed to be. As the definition of both Abbott dimension and the large inductive dimension ultimately involve the boundaries of open sets having smaller dimension, one can arrive at a contradiction for both the finite and infinite case without too much trouble. Again, the interested reader can find the technical details in the appendix.

\begin{theorem}\label{euclidean space}
For all $n \geq 1$, $Ab(\mathbb{R}^{n})=n$. 
\end{theorem}

With the results of this section we have established Abbott dimension as a legitimate definition of dimension. As promised in the introduction, our next task is to show that Abbott dimension does not agree with the classical definitions of dimension (represented here by $Ind$) on the class of separable metric spaces. 

\section{Inequivalence of Abbott dimension with classical definitions.}
In this section we will show that Abbott dimension does not agree with the classical definitions of dimension. Specifically we will begin by showing that the Abbott dimension of the Knaster-Kuratowski fan (a noncompact space) is $0$ while its large inductive dimension is $1$. Then we will show that the Abbott dimension of hereditarily indecomposable continua is at most $1$. However, as shown by Bing in \cite{Bing}, there are hereditarily indecomposable continua of arbitrarily high covering dimension (and consequently large inductive and small inductive dimension). Along the way we will give a detailed description of some well-known spaces in the class of indecomposable and hereditarily indecomposable continua. The reader comfortable with these kinds of spaces may skip from the definition of indecomposable and hereditarily indecomposable continua to Theorem \ref{hindecomposableAbbott}, the main result of this section.
\vspace{\baselineskip}

\noindent
As said above, we will begin with the Knaster-Kuratowski fan. Its construction is as follows (and as found in \cite{Counterexamples}):
\vspace{\baselineskip}

Let $C$ denote the one thirds Cantor set in $[0,1]\times\{0\}\subseteq\mathbb{R}^{2}$ and let $z=\left(\frac{1}{2},\frac{1}{2}\right)$. Then let $Y$ be the union of all line segments joining $z$ to points of $C$. For a particular $x\in C$ let $L_{x}$ be the line segment in $Y$ joining $x$ to $z$. Now let $C_{1}\subseteq C$ be the set of points that appear as endpoints of intervals removed in the construction of $C$, and let $C_{2}$ be the complement of $C_{1}$ in $C$. If $x\in C_{1}$ define $\hat{L}_{x}=\{(r,s)\mid s\in\mathbb{Q}\}$. Similarly, if $x\in C_{2}$ define $\hat{L}_{x}=\{(r,s)\mid s\notin\mathbb{Q}\}$. The Knaster-Kuratowski fan is the space $X=\bigcup_{x\in C}\hat{L}_{x}$. Why this space is referred to as a "fan" is more evident when an image of the space is rendered. The fan is well known for being a space that is connected, but not locally connected. It also has what is known as a dispersion point. That is, the point $z=\left(\frac{1}{2},\frac{1}{2}\right)$ is such that $X\setminus\{z\}$ is totally disconnected.

\begin{figure}[!h]
\centering
\includegraphics{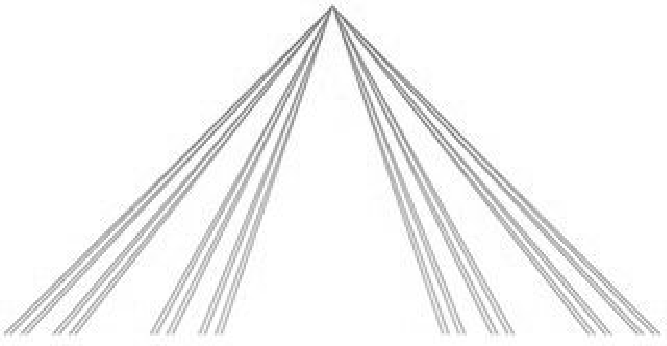}
\caption{Knaster-Kuratowski fan.}
\end{figure}

\begin{theorem}
If $X$ is the Knaster Kuratowski fan, then $Ab(X)=0$ and $Ind(X)=1$. 
\end{theorem}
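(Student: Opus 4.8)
The plan is to establish the two halves separately: $Ind(X) = 1$ is the classical fact (the Knaster–Kuratowski fan is a connected subset of the plane, hence $Ind(X) \le 1$ by Theorem \ref{euclideanInd} together with Theorem \ref{subspaceInd}, and it is not totally disconnected since it is connected and nondegenerate, so $Ind(X) \ge 1$ by Theorem \ref{dim0}). The substantive half is showing $Ab(X) = 0$, equivalently that $X_1 = \emptyset$, i.e. no subspace $Y \subseteq X$ can satisfy items $(1)$–$(4)$ of the definition of Abbott dimension with some witness $K \in X_0$.

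First I would record the key structural feature: the fan has a dispersion point $z_0 = \left(\tfrac12,\tfrac12\right)$, so $X \setminus \{z_0\}$ is totally disconnected, and moreover every connected subset of $X$ with more than one point must contain $z_0$ (this is how a dispersion point behaves: a connected set avoiding $z_0$ lives in the totally disconnected $X \setminus \{z_0\}$, hence is a singleton). Consequently, any continuum $W_{x,z} \subseteq Y$ appearing in item $(2)$ that is nondegenerate must contain $z_0$; and any element $V_U \in \mathcal{C}(x,y,Y)$ from item $(3)$, being open and connected in $Y \subseteq X$, is either a single point or contains $z_0$.

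The heart of the argument is then to derive a contradiction from item $(4)$ applied to such a $V_\alpha$. Suppose $Y \in X_1$ with witness $K \in X_0$ (an arbitrary subset of $X$), observation point $z \in Y \setminus K$, and a nondegenerate continuum $W_{x,z}$; then $z_0 \in W_{x,z}$. Item $(3)$ gives a neighborhood basis of $W_{x,z}$ inside $\mathcal{C}(x,y,Y)$; each such basic open set $V$ contains $W_{x,z} \ni z_0$, is connected, and satisfies $cl_Y(V) \cap K \subseteq U$. Item $(4)$ demands that (for the $V_\alpha$ with $cl_Y(V_\alpha)\cap K$ small enough) there be $D \subseteq \partial_Y V_\alpha$ with $D \in X_0$ — which is automatic since $X_0$ is the full power set, so item $(4)$ in fact imposes almost nothing. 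This means the real obstruction must come from items $(2)$ and $(3)$ jointly: I would argue that the requirement "$x \in V_U \cap K \subseteq cl_Y(V_U) \cap K \subseteq U$ for every neighborhood $U$ of $x$ in $K$" combined with connectedness of $V_U$ forces $x = z_0$ as well (since shrinking $U$ shrinks $V_U \cap K$ toward $x$, yet $V_U$ must stay connected and hence contain $z_0$ whenever $V_U$ is nondegenerate, and $V_U$ must be nondegenerate because it contains the nondegenerate $W_{x,z}$). But then $x \in K$ and $z_0 \in W_{x,z}$ with $x = z_0$, and one pushes this to contradict $z \in Y \setminus K$ together with $z \in W_{x,z} \ni z_0 = x \in K$: tracing through, the line of sight $W_{x,z}$ is a nondegenerate continuum through $z_0$, but removing $z_0$ disconnects it into pieces in $X \setminus \{z_0\}$, each of which is a single point, so $W_{x,z} = \{z_0\}$, contradicting nondegeneracy. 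Hence no nondegenerate $W_{x,z}$ exists, so item $(2)$ fails for every candidate, $X_1 = \emptyset$, and $Ab(X) = 0$.

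The main obstacle I anticipate is the bookkeeping in the previous paragraph: precisely which of items $(2)$–$(4)$ is violated, and making the "connected $\Rightarrow$ degenerate or through $z_0$" dichotomy interact correctly with the nested neighborhood condition in item $(3)$ (in particular the subtlety that $V_U$ is open and connected in $Y$, not in $X$, so I must be careful that a connected subset of $Y \subseteq X$ is still a connected subset of $X$ — which it is — before invoking the dispersion-point property). I would isolate as a preliminary lemma the statement: \emph{every nondegenerate connected subset of $X$ contains $z_0$}, prove it directly from total disconnectedness of $X \setminus \{z_0\}$, and then the theorem follows by the contradiction sketched above.
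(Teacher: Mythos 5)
Your overall route is the same as the paper's: both halves reduce to the single fact that the Knaster--Kuratowski fan contains no nondegenerate subcontinuum, so item $(2)$ of the definition can never be satisfied (any witness $K$ with a point $x$ and an observation point $z\in Y\setminus K$ forces a continuum containing the two distinct points $x$ and $z$), hence $X_1=\emptyset$ and $Ab(X)=0$; the paper simply asserts this fact, while you derive it from the dispersion point. Your treatment of $Ind(X)=1$ is correct (and in fact cleaner than the paper's, which contains a typo, writing $Ind(X)<1$ where $Ind(X)<2$ is meant). The long detour through items $(3)$ and $(4)$ is unnecessary once the continuum fact is in hand, but it does no harm.

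There is, however, a genuine soft spot in your final step. You argue: $W_{x,z}$ is a nondegenerate connected set, so it contains $z_0$; removing $z_0$ leaves a subset of the totally disconnected $X\setminus\{z_0\}$, whose components are singletons; \emph{therefore} $W_{x,z}=\{z_0\}$. That last inference is invalid for general connected sets --- indeed the fan itself is the canonical counterexample: it is a nondegenerate connected set $W$ with $W\setminus\{z_0\}$ totally disconnected. The inference is only saved by the compactness of $W_{x,z}$: a nondegenerate \emph{continuum} cannot have a dispersion point. You need to invoke that explicitly, e.g.\ via the boundary bumping lemma: pick $p\in W_{x,z}$ with $p\neq z_0$ and an open $U\ni p$ in $W_{x,z}$ with $z_0\notin cl(U)$ and $cl(U)\neq W_{x,z}$; then the component of $p$ in $cl(U)$ meets $\partial U$, producing a nondegenerate connected subset of $X\setminus\{z_0\}$, which is the desired contradiction. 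With that lemma inserted, your proof is complete and matches the paper's in substance.
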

\begin{proof}
As $X$ is nonempty we have that $Ab(X)\geq 0$. However, as $X$ does not contain any nontrivial continua we have that $Ab(X)<1$. Therefore $Ab(X)=0$. However, as $X$ is connected we must have that $Ind(X)\geq 1$. Meanwhile $X$ has empty interior in $\mathbb{R}^{2}$ and therefore $Ind(X)<2$, so $Ind(X)=1$. 
\end{proof}

With the Knaster Kuratowski fan we have already shown that Abbott dimension does not agree with the classical definitions on separable metric spaces. However, leaving the story here would be somewhat unsatisfying. The Knaster Kuratowski fan, while separable, is not compact and feels like it could be just an oddity. However, as we will now proceed to show, enforcing a requirement of compactness does nothing to insure any kind of coincidence between Abbott dimension and the classical definitions. More specifically, we will be considering the family of hereditarily indecomposable continua, defined as follows.

\begin{definition}\label{indecomposable}
A metric continuum $K$ is called {\bf indecomposable} if $K$ cannot be written as the union of two proper subcontinua. We say that $K$ is {\bf hereditarily indecomposable} if every subcontinuum of $K$ is also indecomposable.
\end{definition}

To help digest this definition let's consider the following example, the "bucket handle" continuum defined as follows and as can be found in \cite{Ingram}:
\vspace{\baselineskip}

Let $C$ be the one thirds Cantor set in $[0,1]\times\{0\}\subseteq\mathbb{R}^{2}$. Define $C_{0}$ to be the collection of semicircles lying above the $x$-axis in $\mathbb{R}^{2}$ that have endpoints in $C$ and are centered on $\left(\frac{1}{2},0\right)$. Then define $C_{1}$ to be the collection of semicircles lying below the $x$-axis centered on the midpoint of $\left[\frac{2}{3},1\right]$ and with endpoints in $C\cap\left[\frac{2}{3},1\right]$. Further collections of semicircles $C_{i}$ are defined to be the collections of semicircles lying below the $x$-axis that are centered on the midpoint of the interval $\left[\frac{2}{3^{i}},\frac{3}{3^{i}}\right]$ with endpoints in $C\cap\left[\frac{2}{3^{i}},\frac{3}{3^{i}}\right]$. The bucket handle continuum is the union of all of the collections $C_{i}$ for $i\geq 0$.

\begin{figure}[!h]
\includegraphics{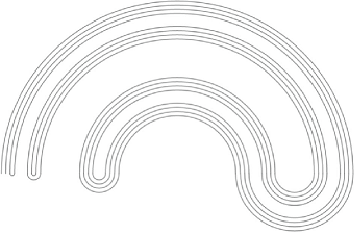}
\centering
\caption{An approximate image of the bucket handle continuum.}
\end{figure}

There are other examples of interesting indecomposable continua, such as the "Lakes of Wada" which is an indecomposable continuum that fascinatingly arises as the common boundary of three open disjoint connected subsets of $\mathbb{R}^{2}$. However, while it would be easy for us to spend a great amount of time working through such examples, they are not the focus of this section. We will direct those curious about the Lakes of Wada to its original description in \cite{Yoneyama} (on page $60$). As stated in the introduction, the subfamily of indecomposable continua that is of primary interest to us in this section is hereditarily indecomposable continua which, as described in Definition \ref{indecomposable}, are indecomposable continua all of whose subcontinua are themselves indecomposable. Perhaps the most famous example of a hereditarily indecomposable continuum is the "pseudoarc".\footnote{In 1920 Knaster and Kuratowski asked in \cite{knasterkuratowski} if a nondegenerate homogeneous continuum in the plane must be a closed curve. The next year Mazurkiewicz asked in \cite{mazurkiewicz} if every continuum in the plane which is homeomorphic to all of its nondegenerate subcontinua must be an arc. In \cite{knaster} Knaster would give an example of a hereditarily indecomposable continuum in 1923. In \cite{bing48} Bing would would answer Knaster and Kuratowski's question negatively with his own construction in 1948, and Moise would answer Mazurkiewicz's question negatively in \cite{moise}. Moise dubbed his example the "pseudoarc" due to its similarity to an arc. Bing would go on in \cite{bing51} to show that his, Knaster's, and Moise's examples were all homeomorphic. This history and the results surrounding the pseudoarc can be found in \cite{nadler}.} Below we give a construction of the pseudoarc as in \cite{nadler}. This construction is not needed for the results of this section, but we include it here simply for the sake of completion. The reader interested only in the results of this section may safely skip it.
\vspace{\baselineskip}

Call a finite collection $\{U_{1},\ldots,U_{n}\}$ of subsets of a set $X$ a {\bf simple chain} from points $x$ to $z$ in $X$ if $U_{i}\cap U_{j}\neq\emptyset$ if and only if $|i-j|\leq 1$, $x$ is only in $U_{1}$, and $z$ is only in $U_{n}$. Elements of simple chains are appropriately called {\bf links}. A simple chain $\mathcal{V}=\{V_{1},\ldots,V_{m}\}$ is a {\bf subchain} of another simple chain $\mathcal{U}=\{U_{1},\ldots,U_{n}\}$ when $\mathcal{V}\subseteq\mathcal{U}$. If there are simple chains $\mathcal{U}=\{U_{1},\ldots,U_{n}\}$, $\mathcal{V}=\{V_{1},\ldots,V_{m}\}$, and integers $i,j$ with $i<j<n$ such that $V_{1}=U_{i}$ and $V_{m}=U_{j}$, then we denote $\mathcal{V}$ by $\mathcal{U}(i,j)$. Said shortly, if $\mathcal{V}$ is a subchain of $\mathcal{U}$ with the first link of $\mathcal{V}$ being $U_{i}$ and the last link of $\mathcal{V}$ being $U_{j}$, then we denote $\mathcal{V}$ by $\mathcal{U}(i,j)$. We say that a simple chain $\mathcal{V}$ (strongly) {\bf refines} another simple chain $\mathcal{U}$ if every link (closure of a link) in $\mathcal{V}$ is contained in some link of $\mathcal{U}$. The final notion we need to construct for the pseudoarc is crookedness. 

\begin{definition}
Given a simple chain $\mathcal{V}$ that refines a simple chain $\mathcal{U}$ we say that $\mathcal{V}$ is {\bf crooked} in $\mathcal{U}$ if it satisfies the following:
\begin{itemize}
\item Whenever $\mathcal{V}(i,j)$ is a subchain of $\mathcal{V}$ with $V_{i}\cap U_{h}\neq\emptyset$ and $V_{j}\cap U_{k}\neq\emptyset$ where $|h-k|\geq 3$, there are $r$ and $s$ such that
\[\mathcal{V}(i,j)=\mathcal{V}(i,r)\cup\mathcal{V}(r,s)\cup\mathcal{V}(s,j)\]
\noindent
where $(s-r)(j-i)>0$ and $V_{r},V_{s}$ are subsets of links of $\mathcal{U}(h,k)$ that intersect $U_{k}$ and $U_{h}$, respectively.

\end{itemize}
\end{definition}

This definition is much easier to parse upon seeing the image on the next page. In this figure two simple chains between a pair of points in $\mathbb{R}^{2}$ are depicted. One is a simple chain of four elements of large diameter and the other is a simple chain of thirty elements of small diameter. For the smaller chain to be crooked in the large chain the smaller chain must "double back" on itself twice for every pair of elements of the large chain that are at least three links apart.
\vspace{\baselineskip}

\begin{figure}[!h]
\centering
\includegraphics{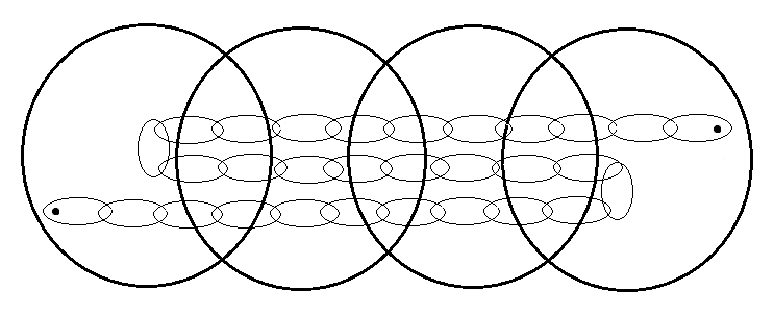}
\caption{A smaller chain that is crooked in a larger chain.}
\end{figure}

The pseudoarc is then constructed as a subset of $\mathbb{R}^{2}$ as follows. Let $p$ and $q$ be distinct points of $\mathbb{R}^{2}$ and let $\mathcal{C}_{1}$ be a simple chain between them made up of connected open subsets. Then, for $n\geq 2$ one can inductively construct a sequence of simple chains $\mathcal{C}_{n}$ between $p$ and $q$ made up of open connected subsets of $\mathbb{R}^{2}$ such that $\mathcal{C}_{n}$ strongly refines $\mathcal{C}_{n-1}$, $\mathcal{C}_{n}$ is crooked in $\mathcal{C}_{n-1}$, and the diameters of elements of $\mathcal{C}_{n}$ are at most $\frac{1}{n}$. The pseudoarc is the space $X=\bigcap_{n=1}^{\infty}\left(\bigcup\mathcal{C}_{n}\right)$ with the metric topology inherited from $\mathbb{R}^{2}$. 
\vspace{\baselineskip}

One final remark before proving our main result of this section. We moved to our discussion of indecomposable and hereditarily indecomposable continua after suggesting that the peculiarity of the Knaster-Kuratowski fan might just be a unique and uncommon oddity. The reader might be inclined to make the same judgement of indecomposable and hereditarily indecomposable continua as well. However, these kinds of continua are not at all uncommon. In fact, in a sense, "most" continua in $\mathbb{R}^{n}$ are hereditarily indecomposable.\footnote{One may denote the set of all continua in $\mathbb{R}^{n}$ by $C(\mathbb{R}^{n})$ and endow it with the Hausdorff metric $d_{h}$. Given two continua $X,Y\in C(\mathbb{R}^{n})$, $d_{h}(X,Y)$ is the infimum over all $\epsilon>0$ such that $X\subseteq B(Y,\epsilon)$ and $Y\subseteq B(X,\epsilon)$. Here $B(X,\epsilon)$ is the open metric ball about $X$ of radius $\epsilon$. The result we reference says that hereditarily indecomposable continua are a dense subset of this space.} What's more, contained in every $n+1$ dimensional subset of $\mathbb{R}^{n+1}$ is an $n$-dimensional hereditarily indecomposable continuum (where $n$-dimensional is with respect to the classical definitions). Proofs of these results can be found in \cite{nadlerHyperspace}. With that said, let's proceed to the main result of this section. 

\begin{theorem}\label{hindecomposableAbbott}
If $X$ is a hereditarily indecomposable continuum, then $Ab(X)\leq 1$, and $Ab(X)=0$ if and only if $X$ is a singleton.
\end{theorem}
\begin{proof}
We will use the following property of hereditarily indecomposable continua that follows from the definition, if $K$ is a hereditarily indecomposable continuum and $K_{1},K_{2}\subseteq K$ are subcontinua such that $K_{1}\cap K_{2}\neq\emptyset$ then either $K_{1}\subseteq K_{2}$ or $K_{2}\subseteq K_{1}$. Now let $X$ be a hereditarily indecomposable continuum. If $X$ is not a singleton, then because $X$ is a continuum we have that $Ab(X)\geq 1$. Assume $Y_{1}\ll_{k}Y_{2}$ for some $k$ with $z\in Y_{2}$ an observation point. If $x_{1},x_{2}\in Y_{1}$ are distinct points, then there are disjoint open neighborhoods $U_{1},U_{2}\subseteq Y_{1}$ containing $x_{1}$ and $x_{2}$ respectively. Then there is a continuum $W_{z,x_{1}}\subseteq Y_{2}$ that contains $x_{1}$ and $z$. Moreover, there is a connected open set $V_{1}\subseteq Y_{2}$ containing $W_{z,x_{1}}$ and such that $x_{1}\in cl_{Y_{2}}(V_{1})\cap Y_{1}\subseteq U_{1}$. Similarly there is a continuum $W_{z,x_{2}}$ and corresponding connected open set $V_{2}\subseteq Y_{2}$. However, as $z$ is an element of $W_{z,x_{1}}$ and $W_{z,x_{2}}$ we must have that $W_{z,x_{1}}\subseteq W_{z,x_{2}}$ or $W_{z,x_{2}}\subseteq W_{z,x_{1}}$. Then $cl_{Y_{2}}(V_{1})\cap Y_{1}\not\subseteq U_{1}$ and $cl_{Y_{2}}(V_{2})\cap Y_{1}\not\subseteq U_{2}$. This is a contradiction. Our beginning assumption was that $Y_{1}$ contained distinct points. We must then have that $Y_{1}$ is a singleton, and singletons are in $X_{0}$ and no other $X_{k}$. Therefore we have that $Ab(X)\leq 1$. 
\end{proof}

\section{Discussion.}
In the introduction we asked two main questions. First, what is dimension? This was not meant to be a question that we would actually answer, but one that would inspire us to reflect on dimension and create definitions for and descriptions of dimension, as Poincar\'e, Lebesgue, and the various other mathematicians mentioned in the introduction did. This led us to our second question, namely, are the definitions of classical dimension theory the "right" definitions? We said that the "right" definition could be characterized as a geometrically intuitive definition such that every other geometrically intuitive definition must agree with the "right" definition on the spaces we take to be natural. We have argued in this paper that the answer to this question is "no" by way of Abbott dimension. The reader may at this point raise multiple objections. They may disagree with our characterization of what the "right" definition of dimension should be. It is our hope that the reader with such an objection will come up with their own notion of what it means to have the "right" definition and ask themselves if the "right" definition of dimension has been found yet. The reader may also object to our positioning of Abbott dimension as being geometrically intuitive. This objection is very welcome (hoped for even) and to such a reader we would issue the challenge of finding their own definition of dimension that is, to them, geometrically intuitive and then compare that definition to the classical ones. Even if no definition is ultimately created, the time spent reflecting on how to express what dimension is is time well spent. 
\vspace{\baselineskip}

\noindent
{\bf Acknowledgement:} The author would like to thank Kate Franklin for her reading of, and input on, early drafts of this paper. The author would also like to thank the referees and editors for this paper whose input greatly improved it. This work was supported by the Israel Science Foundation grant No. 2196/20

\section{Appendix.} In this appendix we have collected results deemed technical enough to be distracting from the main narrative of the paper. They are none the less important. In particular, this appendix includes a discussion of the class of model spaces and surrounding technical results. This class is instrumental in proving the ever important Theorem \ref{euclidean space}, the fact that $Ab(\mathbb{R}^{n})=n$. We begin with a question that arises naturally after defining Abbott dimension. Given a space $X$ with $Ab(X)=n$, must it be the case that $X\in X_{n}$? It turns out that the answer to this is no, as the following example shows.

\begin{example}\label{need for model spaces}
Let $X$ be the union of the following subsets of $\mathbb{R}^{3}$: $\{0\}\times\{0\}\times\mathbb{R}$, $\{(q,r,s)\mid q,r,s\in\mathbb{Q},\,s\geq 0\}$, and $\{(x,y,0)\in\mathbb{R}^{3}\mid x^{2}+y^{2}\leq 1\}$. Equip $X$ with the metric inherited from $\mathbb{R}^{3}$. Then $X\in X_{1}$ because of the negative part of the $y$-axis. However, $X\notin X_{2}$ as no open subset of $X$ containing a point of the disc is connected. However, the disc itself is an element of $X_{2}$.  
\end{example}

We will find it useful to work with spaces (or subspaces) $X$ such that $Ab(X)=n$ and $X\in X_{n}$. One might think of these spaces as being the most "well behaved" with respect to Abbott dimension. We codify this property in the following definition.

\begin{definition}A space $X$ is a {\bf model} space if $Ab(X)=\sup\{n\in\mathbb{N}\mid X\in X_{n}\}$. Similarly, if $Y\subseteq X$ is a subspace such that $Ab(Y)=n$ and $Y\in X_{n}$ we call $Y$ a {\bf model subspace}.
\end{definition}

Forunately, in the case of spaces with finite Abbott dimension, such subspaces are always at hand.

\begin{proposition}\label{existence of model subspace}
If $X$ is a space with $Ab(X)=n$, then $X$ contains a model subspace $Y$ such that $Ab(Y)=n$. 
\end{proposition}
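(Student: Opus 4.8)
The strategy is to show that, for $n = Ab(X)$, \emph{any} nonempty member $Y$ of $X_{n}$ is already a model subspace. Note that for $n\geq 1$ every member of $X_{n}$ is automatically nonempty (since $X_{n}\subseteq X_{1}$ by the preceding lemma and $\emptyset\notin X_{1}$), while for $n=0$ one may simply take $Y=X$, which is trivially a model space. For a Hausdorff space $Z$ write $(Z)_{k}$ for the $k$-th set produced by the Abbott construction when $Z$ plays the role of the ambient space, so that $Ab(Z)$ is the greatest $k$ with $(Z)_{k}\neq\emptyset$. It then suffices to prove \textbf{(a)} $Y\in (Y)_{n}$, which forces $Ab(Y)\geq n$, and \textbf{(b)} $Ab(Y)\leq n$; together these give $Ab(Y)=n$ and $Y\in (Y)_{n}$, so that $Y$ is a model subspace of $X$ with $Ab(Y)=n$.

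The observation driving (a) is that clauses $(1)$--$(3)$ in the definition of $X_{k+1}$, together with the topological content of clause $(4)$, are \emph{internal to the candidate set}: the observation point $z$, the continua $W_{x,z}$, the connected open sets $V_{U}$, and the sets $D\subseteq\partial_{Y}V_{\alpha}$ are all subsets of the candidate, and each property required of them --- compactness and connectedness of the $W_{x,z}$, openness and connectedness of the $V_{U}$, their lying in the families $\mathcal{C}(x,z,Y)$, and the closures $cl_{Y}$ and boundaries $\partial_{Y}$ --- is computed inside the candidate with its own subspace topology. The ambient space intervenes \emph{only} through the requirements that the witness $K$ and the boundary pieces $D$ lie in $X_{k}$, and these refer to one stage below the stage being built. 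Hence unwinding a verification that $Y\in X_{n}$ yields a finite, well-founded tree of witnessing data, every set of which is a subset of $Y$.

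Concretely, I would prove by induction on $k$ the following \emph{restriction lemma}: if $Z\subseteq W\subseteq X$ and $Z\in X_{k}$, then $Z$ also lies in the $k$-th set $(W)_{k}$ of the Abbott construction over $W$. The base case $k=0$ is immediate, as $(W)_{0}$ is the power set of $W$ and $Z\subseteq W$. For the inductive step, suppose $Z\in X_{k+1}$ is witnessed by $K\in X_{k}$; then $K\subseteq Z\subseteq W$, so $K\in (W)_{k}$ by the inductive hypothesis, and every boundary piece $D$ occurring in clause $(4)$ satisfies $D\subseteq\partial_{Z}V_{\alpha}\subseteq Z\subseteq W$ and $D\in X_{k}$, hence $D\in (W)_{k}$ by the inductive hypothesis; since clauses $(1)$--$(3)$ and the topological part of $(4)$ are internal to $Z$ and therefore hold unchanged, $K$ witnesses $Z\in (W)_{k+1}$. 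Taking $W=Z=Y$ gives $Y\in (Y)_{n}$, which is (a). For (b) I would appeal to the subspace theorem $Ab(Y)\leq Ab(X)$ --- or, should that not yet be available at this point, to the dual and equally short induction showing $A\subseteq B\Rightarrow (A)_{k}\subseteq (B)_{k}$ for all $k$, which again exploits only the internal character of clauses $(1)$--$(4)$ --- to conclude $Ab(Y)\leq Ab(X)=n$.

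I do not anticipate a substantive obstacle: the statement is essentially a matter of parsing the definition correctly. The one place that demands care is the bookkeeping --- cleanly separating the clauses of the definition that are internal to the candidate set from the two that reach back into the ambient tower, and organizing the induction so that the drop from stage $k+1$ to stage $k$ in passing to the witness $K$ and the boundary pieces $D$ keeps the recursion well founded.
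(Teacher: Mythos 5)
Your proposal is correct and follows the same basic strategy as the paper: take any $Y\in X_{n}$ (nonempty since $Ab(X)=n$) and argue that it is already a model subspace. The difference is one of rigor rather than route. The paper's own proof is a three-line remark that never explicitly establishes $Ab(Y)\geq n$, i.e.\ that $Y$ belongs to the $n$-th stage of the Abbott construction performed \emph{inside} $Y$ rather than inside $X$; it tacitly treats membership in the ambient tower $X_{n}$ as if it computed the intrinsic invariant $Ab(Y)$. Your restriction lemma --- that $Z\in X_{k}$ and $Z\subseteq W\subseteq X$ imply $Z$ lies in the $k$-th stage of the tower built over $W$, proved by induction using the observation that clauses $(1)$--$(3)$ and the topological content of $(4)$ are internal to the candidate set while only the witness $K$ and the boundary pieces $D$ reach into the ambient tower one level down --- is exactly the missing step, and your handling of the upper bound via the monotonicity $(A)_{k}\subseteq(B)_{k}$ (so as not to forward-reference the subspace theorem, which appears later in the paper) is also sound. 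In short, you have written the proof the paper should have written.
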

\begin{proof}
If $Ab(X)=n$, then $X_{n}\neq\emptyset$ so there is a $Y\subseteq X$ such that $Y\in X_{n}$. For every subspace $K\subseteq Y$ we have that $K\in X_{m}$ for some $m\leq n$ because $Ab(X)=n$. Therefore $Y$ is a model subspace. 
\end{proof}
\begin{proposition}
If $(X,d)$ is a nonempty metric space such that $Ab(X)=0$, then $X$ is a model space. 
\end{proposition}

Recall that when discussing the fact that $Ab(\mathbb{R}^{n})\geq n$ we mentioned a particular sequence of subspaces. Specifically, for $\mathbb{R}^{3}$ we mentioned that $Ab(\mathbb{R}^{3})\geq 3$ could be seen by considering a point, a line segment containing that point, a square one of whose sides is that segment, and a cube one of whose faces is the square. Nested sequences of subspaces of this kind turn out to be quite useful, even more so when the subspaces concerned are model subspaces. We codify sequences of such subspaces in the following definition. 

\begin{definition}
A {\bf dimensional profile} or simply {\bf profile} of a space $X$ with $Ab(X)=n$ is a sequence of subspaces $Y_{0},\ldots,Y_{n}$ such that $Y_{0}\ll_{1}Y_{1}\ll_{2}\cdots\ll_{n}Y_{n}$. A profile $Y_{0},\ldots,Y_{n}$ for a space $X$ is called a {\bf model profile} if $Y_{n}=X_{n}$ and $Y_{k}$ is a model space for each $k$. Given a property $P$, we will refer to a profile $Y_{1},\ldots,Y_{n}$ for a metric space $X$ as a {\bf $P$ profile} for $X$ if every $Y_{k}$ has property $P$ (e.g. a connected profile). If $Ab(X)=\infty$ we define a profile to be an infinite sequence of subspaces $(Y_{i})_{i\in\mathbb{N}}$ such that $Y_{i}\ll_{i+1}Y_{i+1}$ for each $i$. 
\end{definition}

\begin{proposition}\label{model profile}
Every model space $X$ of finite Abbott dimension admits a model profile.
\end{proposition}
\begin{proof}
Let $X$ be a given model space with $Ab(X)=n$. Then there is an element $Y\in X_{n-1}$ such that $Y\ll_{n}X$. As $Y\subseteq X$ we have that $Ab(Y)\leq n$, and so by Proposition \ref{existence of model subspace} there must be a model subspace $Y_{n-1}\subseteq Y$ with the same Abbott dimension as $Y$ which is at least $n-1$. Then $Y_{n-1}\in X_{n-1}$. Because $Y$ witnesses $X$ being in $X_{n}$ and $Y_{n-1}$ is a subset of $Y$ we have that $Y_{n-1}$ witnesses $X$ being in $X_{n}$ as well by Lemma \ref{subspaces witness}. We may repeat this process to find $Y_{k}$ for $1\leq k\leq n-2$, yielding a model profile for $X$.

\end{proof}

In the following theorem we show that the class of model spaces includes some well-known spaces, specifically locally connected spaces. This may not come as a shock given how much the definition of Abbott dimension is tied to connected subsets. Recall that if $X$ is a locally connected space with subspace $Y$ and $U\subseteq Y$ is a connected open subset of $Y$, then there is some connected open $\hat{U}\subseteq X$ such that $\hat{U}\cap Y=U$. To see this we need only recall that in a locally connected space, the connected components of every open set are open.

\begin{theorem}
If $(X,d)$ is locally connected then $X$ is a model space. 
\end{theorem}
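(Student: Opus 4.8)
The plan is to show that if $X$ is locally connected with $Ab(X)=n$, then $X$ itself belongs to $X_n$, so that $\sup\{n \in \mathbb{N}\mid X\in X_n\}=n=Ab(X)$ and $X$ is a model space. By definition of Abbott dimension there is some $Y \subseteq X$ with $Y \in X_n$, and hence a witnessing profile $Y_0 \ll_1 Y_1 \ll_2 \cdots \ll_n Y_n$ with $Y_n = Y$. The idea is to ``thicken'' each $Y_k$ to a locally connected subspace of $X$ and ultimately to $X$ itself, using the hypothesis that local connectedness guarantees an abundance of connected open sets: in a locally connected space every point has a neighborhood basis of open connected sets, and every open subset is again locally connected with its components open. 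First I would record these standard facts as the working tools, since items (3) and (4) of the definition of $X_{n+1}$ are precisely demands that certain connected open neighborhoods $V_U \in \mathcal{C}(x,z,X)$ exist and form a neighborhood basis for the ``line of sight'' continuum $W_{x,z}$.

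Next I would proceed by induction on $n$. The base case $n=0$ is immediate since $X_0$ is the full power set of $X$, so $X \in X_0$ always; and $Ab(X) = 0$ forces $X$ to be a model space (this is essentially the corollary already proved for metric spaces, but the same argument works here: $\sup\{n \mid X \in X_n\} \geq 0$, and if $X \in X_1$ we would need $Ab(X) \geq 1$). For the inductive step, suppose the result holds below $n$ and $Ab(X) = n$. Take $K \in X_{n-1}$ with $K \ll_n Y$ for some $Y \in X_n$, with observation point $z \in Y \setminus K$ and continua $W_{x,z} \subseteq Y$ for $x \in K$. I would now verify directly that $K$ (or a suitable locally connected enlargement of $K$ inside $X$) witnesses $X \in X_n$: items (1) and (2) are inherited since $K \subseteq Y \subseteq X$ and the continua $W_{x,z}$ still lie in $X$; for items (3) and (4), given $x \in K$ and an open neighborhood $U$ of $x$ in $K$, local connectedness of $X$ lets me choose a connected open $V_U \subseteq X$ with $W_{x,z} \subseteq V_U$ and $\operatorname{cl}_X(V_U) \cap K$ as small as desired, and since components of open subsets of $X$ are open I can arrange that $\mathcal{C}(x,z,X)$ contains a neighborhood basis for $W_{x,z}$. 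Finally, for item (4), the boundary $\partial_X V_U$ must contain a set $D \in X_{n-1}$; here I would use the inductive/structural information coming from $K \ll_n Y$ — the corresponding boundary sets inside $Y$ lay in $X_{n-1}$ — together with the fact that shrinking $V_U$ appropriately keeps its boundary close to the old boundary data.

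The main obstacle I anticipate is item (4): the boundary condition $\partial_X V_U \supseteq D$ for some $D \in X_{n-1}$. Moving from the ambient space $Y$ to the larger space $X$ enlarges the candidate connected open sets, and their boundaries can change substantially, so the old witnessing sets $D \subseteq \partial_Y V$ need not sit inside $\partial_X V_U$. The resolution I would aim for is to choose $V_U$ not arbitrarily but as a connected open subset of $X$ that is ``adapted'' to $Y$ — e.g. by first taking the connected open set $V'$ in $Y$ supplied by $K \ll_n Y$, and then taking $V_U$ to be a connected open subset of $X$ whose trace on $Y$ is exactly $V'$ and whose closure meets $K$ inside $U$; local connectedness is what makes such a choice possible. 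One then argues that $\partial_X V_U \cap Y$ recaptures (a subset of) $\partial_Y V'$, and that the needed $D \in X_{n-1}$ can be found there, invoking the subspace behavior of the $X_k$ and the lemma $X_{k+1} \subseteq X_k$. The bookkeeping of which connected open set is taken in which space, and checking the ``neighborhood basis'' clause in item (3) survives the enlargement, is where the real work lies; everything else is a routine transfer of the witnessing data along the inclusion $Y \hookrightarrow X$.
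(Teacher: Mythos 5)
Your proposal is essentially the paper's own argument: both hinge on using local connectedness to replace each connected open $V\subseteq Y$ by a connected open $\hat{V}\subseteq X$ with $\hat{V}\cap Y=V$ (the component, necessarily open, of an extending open set that contains $V$), and then transferring the boundary witnesses via $\partial_{Y}V\subseteq\partial_{X}\hat{V}$. The difficulties you flag around items (3) and (4) --- that arbitrary connected open subsets of $X$ need not arise as such extensions, and that closures taken in $X$ may meet $K$ in larger sets --- are genuine, but the paper's proof passes over them just as quickly, so your sketch identifies the same mechanism at essentially the same level of detail as the published argument.
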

\begin{proof}
Assume that $X\in X_{m}$, $X\notin X_{m+1}$, and $Y\subseteq X$ is such that $Y\in X_{n}$ for some $n>m$. Let $Y_{2}\in X_{n-1}$ witness $Y\in X_{n}$. If $V\subseteq Y$ is a connected open set used to show $Y\in X_{n}$, then by the local connectedness of $X$ we have that $U=\hat{V}\cap Y$ for some connected open $\hat{V}\subseteq X$. As $\partial_{Y}V\subseteq\partial_{X}\hat{V}$ we have that if $Y\in X_{n}$, then so must be $X$, contradicting $X\notin X_{m+1}$. Therefore, $X$ is a model space. 
\end{proof}

\begin{corollary}
$\mathbb{R}^{n}$ is a model space for every $n\geq 1$.
\end{corollary}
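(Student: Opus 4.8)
The plan is to deduce this directly from the preceding theorem, which states that every locally connected metric space is a model space. Since $\mathbb{R}^{n}$ carries the Euclidean metric, the only thing that needs to be checked is that $\mathbb{R}^{n}$ is locally connected. First I would recall that the open balls $B(x,\varepsilon)$ form a basis for the topology of $\mathbb{R}^{n}$, and that each such ball is convex, hence path-connected, hence connected. Consequently every point of $\mathbb{R}^{n}$ has a neighbourhood basis consisting of connected open sets, which is exactly the definition of local connectedness.

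Having verified local connectedness, I would simply apply the preceding theorem to conclude that $\mathbb{R}^{n}$ is a model space, completing the argument. There is no genuine obstacle here; the content of the corollary is entirely carried by the theorem it follows, and the verification of local connectedness is a standard elementary fact. As a remark one can note that, combined with Proposition \ref{lower bound} (which yields $Ab(\mathbb{R}^{n})\geq n$), being a model space forces $\mathbb{R}^{n}\in X_{n}$, so that Lemma \ref{model profile} provides a model profile for $\mathbb{R}^{n}$; this observation is what makes the corollary useful in later arguments pinning down $Ab(\mathbb{R}^{n})$ exactly.
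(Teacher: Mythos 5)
Your proof is correct and is exactly the route the paper intends: the corollary is stated without proof as an immediate consequence of the preceding theorem on locally connected spaces, and your verification that $\mathbb{R}^{n}$ is locally connected (convex open balls form a connected basis) supplies the only missing detail.
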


With these results established we can finally prove that $Ab(\mathbb{R}^{n})=n$. As stated in the main body of the paper, we consider the finite and infinite dimensional cases separately.

\begin{theorem}\label{Abbott less than Ind finite}
Let $X$ be a separable model metric space with $Ab(X)=n<\infty$. Then $Ind(X)\geq n$.
\end{theorem}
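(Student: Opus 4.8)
The plan is to induct on $n$, proving that if $X$ is a separable model metric space with $Ab(X) = n < \infty$ then $Ind(X) \geq n$. The base case $n = 0$ is trivial since $Ind(X) \geq 0$ for any nonempty space. For the inductive step, suppose the statement holds for all values below $n$. Since $X$ is a model space with $Ab(X) = n$, Lemma \ref{model profile} gives a model profile $Y_0 \ll_1 Y_1 \ll_2 \cdots \ll_n Y_n = X$. The key witnessing data is the pair $Y_{n-1} \ll_n X$: there is an observation point $z \in X \setminus Y_{n-1}$ and, for each $x \in Y_{n-1}$, a continuum $W_{x,z}$ together with a family of connected open sets $V_U \in \mathcal{C}(x,z,X)$ forming a neighborhood basis of $W_{x,z}$, whose boundaries $\partial_X V_U$ each contain a subset $D \in X_{n-1}$, i.e. a subspace of Abbott dimension at least $n-1$.

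The heart of the argument is to convert this "sight" data into a lower bound on $Ind(X)$ via the definition of the large inductive dimension. Fix a point $x \in Y_{n-1}$. Suppose toward a contradiction that $Ind(X) \leq n-1$. Then for the closed set $\{x\}$ (or more carefully, for the continuum $W_{x,z}$, which is closed in $X$ since $X$ is compact-free — here one must be a bit careful as $X$ need not be compact, but $W_{x,z}$ is a continuum hence compact hence closed in the Hausdorff space $X$) and a suitable open neighborhood, the definition of $Ind$ produces a separating set of large inductive dimension at most $n-2$. The strategy is to run the basis $\{V_U\}$ against this: because the $V_U$ shrink down to $W_{x,z}$ and each $\partial_X V_U$ contains a copy of a space of Abbott dimension $\geq n-1$, the inductive hypothesis forces $Ind(\partial_X V_U) \geq n-1$ (after passing to a model subspace of that $D$ via Proposition \ref{existence of model subspace} to apply induction). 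One then needs a containment or monotonicity fact relating the boundary $\partial_X V_U$ of a small connected neighborhood to the boundary $\partial_X V$ of the set $V$ furnished by the definition of $Ind(X) \leq n-1$; combined with the subspace theorem for $Ind$ (Theorem \ref{subspaceInd}), this should yield $n - 1 \leq Ind(\partial_X V_U) \leq Ind(\partial_X V) \leq n - 2$, a contradiction.

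The main obstacle I anticipate is exactly this last linkage: the definition of $Ind$ gives us control over the boundary of \emph{one} cleverly chosen open set $V$ sandwiched between a closed set and an open set, whereas the Abbott data hands us boundaries of the sets $V_U$ in our prescribed shrinking basis, and there is no a priori reason these coincide or nest. The fix is presumably to choose the open neighborhood $U$ (in the sense of item (4) of the definition of Abbott dimension) small enough that the corresponding $V_U$ is trapped inside the $Ind$-separating set $V$ — using that $\{V_U\}$ is a neighborhood basis for $W_{x,z}$ — so that $\partial_X V_U \subseteq \overline{V_U} \subseteq V \cup \partial_X V$ and one can extract the desired inequality on a subspace. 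Making this precise, and in particular ensuring the subset $D \subseteq \partial_X V_U$ with $D \in X_{n-1}$ actually sits inside the low-dimensional separator, is where the real work lies.

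Once the inductive step is complete we conclude $Ind(X) \geq n = Ab(X)$, as desired.
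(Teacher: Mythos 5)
Your overall strategy is the same as the paper's: induct on $n$, extract a model profile via Lemma \ref{model profile}, use the observation point $z$ and the continuum $W_{x,z}$ together with the connected neighborhood basis from item (3), and derive a contradiction by finding inside some boundary a set of Abbott dimension $n-1$ whose large inductive dimension is forced to be at most $n-2$. But the step you flag as ``where the real work lies'' is a genuine gap, and the patch you sketch does not close it. From $\partial_{X}V_{U}\subseteq\overline{V_{U}}\subseteq V\cup\partial_{X}V$ you cannot conclude $Ind(\partial_{X}V_{U})\leq Ind(\partial_{X}V)$: the subspace theorem (Theorem \ref{subspaceInd}) only gives monotonicity for subsets of $\partial_{X}V$, and $\partial_{X}V_{U}$ may lie almost entirely inside $V$ itself, which a priori has large inductive dimension as big as that of $X$. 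So the chain $n-1\leq Ind(\partial_{X}V_{U})\leq Ind(\partial_{X}V)\leq n-2$ is not justified, and without it the contradiction never materializes.

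The paper avoids comparing the boundaries of two different open sets. Instead it arranges for a \emph{single} open set to play both roles: it asserts that one can find an open $V\supseteq W_{x,z}$ such that \emph{every} open $\hat{V}$ with $W_{x,z}\subseteq\hat{V}\subseteq V$ satisfies $Ind(\partial_{X}\hat{V})\leq m-1$, and then, because $\mathcal{C}(x,z,X)$ contains a neighborhood basis for $W_{x,z}$, it picks $\hat{V}$ from that basis inside $V$. This $\hat{V}$ is simultaneously an Abbott-witnessing set (so item (4) supplies $D\subseteq\partial_{X}\hat{V}$ with $D\in X_{n-1}$, and Proposition \ref{existence of model subspace} supplies a model $H\subseteq D$ with $Ab(H)=n-1$) and a set with $Ind(\partial_{X}\hat{V})\leq m-1\leq n-2$, so Theorem \ref{subspaceInd} applied to $H\subseteq\partial_{X}\hat{V}$ contradicts the inductive hypothesis directly. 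If you want to complete your write-up you should adopt this ``one set, two roles'' device rather than trying to relate $\partial_{X}V_{U}$ to $\partial_{X}V$. (Two smaller points: the paper also treats $n=1$ as a separate base case, using that $Ab(X)=1$ forces a nontrivial continuum while $Ind(X)=0$ forces total disconnectedness by Theorem \ref{dim0}; and note that the universal quantifier over all $\hat{V}$ between $W_{x,z}$ and $V$ is itself a nontrivial strengthening of the definition of $Ind$, so this is the point of the argument that deserves the most scrutiny in either version.)
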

\begin{proof}
As $Ab(X)=0$ implies that $X\neq\emptyset$ we have that $Ab(X)=0$ implies that $Ind(X)\geq 0$. If $Ab(X)=1$, then $X$ necessarily contains a nontrivial connected set (some $W_{z,x}$ as in the definition of $Ab(X)$) and $Ind(X)=0$ implies $X$ is totally disconnected, so $Ind(X)\geq 1$. Now assume that the result holds for $Ab(X)<n$ where $n\geq 2$ and assume towards a contradiction that $Ab(X)=n$ and $Ind(X)=m<n$ where $m\geq 0$ (note that $Ind(X)=-1$ yields an immediate contradiction as $X\neq\emptyset$). Then let $Y_{0},Y_{1},\ldots,Y_{n-1},X$ be a model profile for $X$. Let $z\in X\setminus Y_{n-1}$ be an observation point for $Y_{n-1}$. Let $x\in Y_{n-1}$ and let $W_{z,x}\subseteq X$ be a line of sight from $z$ to $x$ as in the definition of $Ab(X)$. Recall that the collection $\mathcal{C}(x,z,X)$ must contain a neighborhood basis for $W_{z,x}$. By the definition of $Ind$, there is an open set $V\subseteq X$ containing $W_{z,x}$ such that for all open sets $\hat{V}\subseteq X$ with $W_{z,x}\subseteq\hat{V}\subseteq V$ we have that $Ind(\partial_{X}\hat{V})\leq m-1<n-1$. By the definition of $Ab(X)$ we must have that there is some smaller neighborhood $V^{\prime}\in\mathcal{C}(z,x,X)$ containing $W_{z,x}$ and contained in $\hat{V}$ such that if $V^{\prime\prime}\in\mathcal{C}(x,z,X)$ contains $W_{z,x}$ and is contained in $V^{\prime}$ then there is some $D\subseteq\partial_{X}(V^{\prime\prime})$ such that $D\in X_{n-1}$. Letting $V^{\prime\prime}\subseteq X$ with $D\subseteq\partial_{X}(V^{\prime})$ be such sets we must have that $Ind(\partial_{X}(V^{\prime\prime}))\leq m-1<n-1$. As $D\in X_{n-1}$ we have that $Ab(D)\geq n-1$. Because $Ab(X)$ is finite, we also have that $Ab(D)$ is finite by Theorem \ref{subspace theorem}. Then by Proposition \ref{intermediate dimensions} there must be some $H\subseteq D\subseteq\partial_{X}(V^{\prime\prime})$ such that $Ab(H)=n-1$. However, we then must have that $Ind(H)\leq m-1$ by Theorem \ref{subspaceInd}. This contradicts our inductive hypothesis. Therefore $Ind(X)\geq n$. 
\end{proof}

\begin{theorem}
Let $X$ be a separable metric space with $Ab(X)=\infty$. Then $Ind(X)=\infty$. 
\end{theorem}
\begin{proof}
For each integer $n\geq 0$ consider the following statement.

\begin{equation}\label{eqn}
Z \text{ is a separable metric space such that } Ab(Z)=\infty\text{ and }Ind(Z)=n.\tag{$*_n$}
\end{equation}

\noindent
We will prove that (\ref{eqn}) does not hold for any $n$ by induction on $n$. First, for $n=0$ we have that if $Ind(X)=0$ then $X$ is totally disconnected, but $Ab(X)=\infty$ implies that $X_{1}\neq\emptyset$ which implies that $X$ contains a nontrivial connected subset, a contradiction. Therefore, $(*_0)$ does not hold. Assume that $(*_m)$ does not hold for $0\leq m<n$ and assume towards a contradiction that $(*_n)$ holds. Because $Ab(X)=\infty$ we have that $X_{k}\neq\emptyset$ for all nonnegative integers $k$. We then let $Y_{3n}\in X_{3n}$. We can then find a set $Y_{3n-1}\in X_{3n-1}$ such that $Y_{3n-1}\ll_{3n}Y_{3n}$. If $Ab(Y_{3n})<\infty$ then $Ab(Y_{3n})\geq 3n$ and by the conjunction of Theorem \ref{subspaceInd}, Theorem \ref{Abbott less than Ind finite}, and Proposition \ref{existence of model subspace} we would have that $Ind(Y_{3n})\geq 3n$. However this would be a contradiction because $Ind(X)=n$. Therefore $Ab(Y_{3n})$ must be infinite. By an identical argument we must have that any element of $X_{k}$ for $k>n$ must have infinite Abbott dimension. Now let $z\in Y_{3n}\setminus Y_{3n-1}$ be an observation point for $Y_{3n-1}$, $x\in Y_{3n-1}$, and let $W_{z,x}$ be a continuum in $Y_{3n}$ connecting $z$ and $x$ as in the definition of Abbott dimension. Because $Y_{3n}\in X_{3n}$ there is some $V\in\mathcal{C}(z,x,Y_{3n})$ containing $W_{z,x}$ such that if $V^{\prime}\in\mathcal{C}(z,x,Y_{3n})$ is another element containing $W_{z,x}$ then there must be some $D\subseteq\partial_{Y_{3n}}(V^{\prime})$ such that $D\in X_{3n-1}$. Because $Ind(X)=n$ and $\mathcal{C}(x,z,Y_{3n})$ contains a basis for $W_{z,x}$ we have that $V$ can be chosen such that every $V^{\prime}\in\mathcal{C}(z,x,Y_{3n})$ contained in $V$ and containing $W_{z,x}$ is such that $Ind(\partial_{Y_{3n}})\leq n-1$. However, this produces a contradiction as if $V\in\mathcal{C}(z,x,Y_{3n})$ and $V^{\prime}\in\mathcal{C}(z,x,Y_{3n})$ is contained in $V$ and contains $W_{z,x}$ then there must be a $D\subseteq\partial_{Y_{3n}}(V^{\prime})$ such that $D\in X_{3n-1}$, but the preceeding discussion would imply that both $Ab(D)=\infty$ and $Ind(D)\leq n-1$ which contradicts our inductive assumption that $(*_k)$ does not hold for any separable metric space for $k<n$. We have then shown that $(*_n)$ can not hold for any nonnegative integer $n$ and therefore we must have that $Ind(X)=\infty$.

\end{proof}

\begin{corollary}
If $X$ is a separable metric space with $Ab(X)<\infty$ then $Ab(X)\leq Ind(X)$.
\end{corollary}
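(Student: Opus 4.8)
The plan is to reduce the general separable metric case to the model-space case already handled by Theorems~\ref{Abbott less than Ind finite} and the infinite-dimensional analogue. Suppose $X$ is a separable metric space with $Ab(X)=n<\infty$. By Proposition~\ref{existence of model subspace}, $X$ contains a model subspace $Y$ with $Ab(Y)=n$. Since $Y$ is itself a separable metric space (as a subspace of one) and is a model space, Theorem~\ref{Abbott less than Ind finite} applies to give $Ind(Y)\geq n$.

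The second step is to push this inequality back up to $X$ via the subspace monotonicity of $Ind$. By Theorem~\ref{subspaceInd}, since $Y\subseteq X$ and $X$ is separable metric, we have $Ind(Y)\leq Ind(X)$. Chaining the two inequalities yields $n\leq Ind(Y)\leq Ind(X)$, i.e. $Ab(X)=n\leq Ind(X)$, which is exactly the claim. One should briefly address the edge cases: if $Ind(X)=\infty$ the inequality is trivial, and if $X=\emptyset$ then $Ab(X)=-1=Ind(X)$; these require no real argument.

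I expect no serious obstacle here --- the corollary is essentially a bookkeeping consequence of three earlier results (existence of model subspaces, the model-space bound, and monotonicity of $Ind$). The only point requiring a moment's care is confirming that $Ab(Y)=n$ genuinely holds for the model subspace $Y$ produced by Proposition~\ref{existence of model subspace} (the proposition is stated precisely so that $Ab(Y)=n$), and that the separability hypothesis needed for Theorem~\ref{subspaceInd} is inherited by $Y$, which is immediate since subspaces of separable metric spaces are separable metric. Thus the proof is a short two-line chaining of inequalities once the model subspace is extracted.
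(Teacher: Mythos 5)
Your argument is correct and coincides with the paper's own proof: extract a model subspace $Y$ with $Ab(Y)=n$ via Proposition~\ref{existence of model subspace}, apply Theorem~\ref{Abbott less than Ind finite} to get $n\leq Ind(Y)$, and conclude with Theorem~\ref{subspaceInd} that $Ind(Y)\leq Ind(X)$. Nothing further is needed.
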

\begin{proof}
If $X$ is a separable metric space with $Ab(X)=n$, then there is a model subspace $Y\subseteq X$ with $Ab(Y)=n$ by Proposition \ref{existence of model subspace}. By Theorem \ref{Abbott less than Ind finite} we must have that $Ab(Y)\leq Ind(Y)$ and by Theorem \ref{subspaceInd}, $Ind(Y)\leq Ind(X)$. Then $Ab(X)=Ab(Y)\leq Ind(Y)\leq Ind(X)$. 
\end{proof}

\noindent
Theorem \ref{euclidean space} then follows as a quick corollary.


\end{document}